\documentclass[a4paper,11pt]{article}
\usepackage[utf8]{inputenc}
\usepackage{amsthm}
\usepackage{amsmath}
\usepackage{amssymb}
\usepackage{caption}
\usepackage{cite}
\usepackage{subcaption}
\usepackage[labelformat=simple]{subcaption}
\usepackage{a4wide}

\usepackage{silence}
\usepackage{tikz}
\usetikzlibrary{calc, fit, decorations.pathreplacing,decorations.pathmorphing}
\usepackage{tkz-graph} 
\tikzset{cycle2/.style={very thick, densely dashed}}

\usepackage{changes}

\usepackage{hyperref}
\newcommand{\footremember}[2]{%
	\footnote{#2}
	\newcounter{#1}
	\setcounter{#1}{\value{footnote}}%
}
\newcommand{\footrecall}[1]{%
	\footnotemark[\value{#1}]%
}

\usepackage{enumitem}
\setlist[enumerate]{noitemsep,topsep=3pt}
\setlist[itemize]{noitemsep,topsep=3pt}

\theoremstyle{definition}
\newtheorem{defin}{Definition}

\theoremstyle{plain}
\newtheorem{lem}[defin]{Lemma}
\newtheorem{thm}[defin]{Theorem}

\newcommand{\F}{\mathcal{F}}
\newcommand{\U}{\mathcal{U}}

\newcommand{\bfv}{\mathrm{bf}}
\newcommand{\bfvgraph}[1]{H_{#1}}
\newcommand{\h}{h}
\newcommand{\abs}[1]{\left\vert{#1}\right\vert}

 \newtheorem{lemma}[defin]{Lemma}
 \newtheorem{theorem}[defin]{Theorem}
 \newtheorem{proposition}[defin]{Proposition}
 \newtheorem{corollary}[defin]{Corollary}

 \theoremstyle{definition}

\title{On the Maximum Number of Vertices that Belong to Every Metric Basis \footnote{The paper has been presented in part in CALDAM 2025 \cite{Hakanen-conference-version}.}}

\author{
}
\author{%
  Anni Hakanen  \footnote{Turku Collegium for Science, Medicine and Technology (TCSMT), University of Turku, Finland} \footremember{TY}{Department of Mathematics and Statistics, University of Turku, Finland}%
  \and Ville Junnila \footrecall{TY}%
  \and Tero Laihonen \footrecall{TY}%
  \and Havu Miikonen \footrecall{TY}
  \and Ismael G. Yero \footremember{UC}{Department of Mathematics, Universidad de C\'{a}diz (Algeciras Campus), Spain} 
  }

\date{}

\begin{document}

\maketitle

\begin{abstract}
Metric bases of graphs have been widely studied since their introduction in the 1970's by Slater and, independently, by Harary and Melter. In this paper, we concentrate on the existence of vertices in a graph $G$ that belong to all metric bases of $G$. We call these basis forced vertices, and denote the number of them by $\bfv(G)$. We show that $\bfv(G)\le 2/3(n-k-1)$ for any connected nontrivial graph $G$ of order $n$ having $k$ vertices in each metric basis. In addition, we show that this bound can be attained. Furthermore, the previous result implies the bound $\bfv(G)\le 2/5(n-1)$ formulated in terms of the order $n$ of the graph for any nontrivial connected graph $G$.
This result answers a question posed by Bagheri \emph{et al.} in 2016. Moreover, we provide a complete realization of the parameters $n$, $\dim(G)$ and $\bfv(G) \ge 1$ within the previous bounds. We consider some extremal cases related to basis forced vertices in a graph, in particular, we give a full characterization of the graphs with $\bfv(G) = 2$ and $\dim(G) = n-4$.
\end{abstract}

\noindent
{\bf Keywords:} Metric dimension; metric basis; basis forced vertices, realization, extremal graphs  \\

\noindent
{\bf AMS Subj.\ Class.\ (2020)}: 05C12

\section{Introduction}

A resolving set $R\subseteq V(G)$ in a graph $G=(V(G),E(G))$ represents a structure with the capability of uniquely recognizing all the vertices in $V(G)$ throughout a vector of distances to the vertices of $R$. 
The vertices of resolving sets are usually called ``landmarks'' and the optimization of the quantity of vertices in any resolving set has led to the notion of a metric basis, which is a resolving set of the smallest possible cardinality in $G$. These notions were first and separately introduced in \cite{Slater75} by Slater, and in \cite{Harary76} by Harary and Melter, but they remained almost without any attention until the work \cite{Chartrand}, which created a breaking point in the interest on them. Nowadays, the metric dimension of graphs is a very well-known topic and there is a huge amount of information about it. Some recent and interesting works on this parameter are for instance \cite{Claverol,Foster,Mashkaria,SedlarUnicyclic,Sedlar22,Sedlar21,Wang, Wu}. Moreover, for more information on this area, we suggest the very interesting survey \cite{till-2022+}.

Several applications of resolving sets have been proposed in literature. The connections of these structures to robot navigation already appeared in \cite{Harary76}, and have been further developed in several other works. A recent and novel location property was presented in \cite{tillquist-2019}, where (certain kind of) resolving sets were used while identifying biological sequence data. In order to avoid presenting a large list of such works, any interested reader might simply check the recent survey \cite{till-2022+}, which contains a fairly complete compendium of applications, and combinatorial or computational properties of resolving sets and metric bases.

An interesting fact, regarding the metric bases of a given graph $G$, relates to the possible existence of vertices of $G$ such that they are required to be landmarks in every metric basis, in order to locate or resolve the vertices of $G$. This fact means that a vertex satisfying this property plays a crucial role as a landmark, and thus, identifying such vertices is worthwhile. However, one cannot efficiently decide that a given vertex of a graph possesses such a property since, as proved in \cite{BasisForced}, it is a co-NP-hard problem to check whether a given vertex of a graph belongs to every metric basis of a graph. The vertices of a graph satisfying the previously mentioned property were called \textit{basis forced vertices} in \cite{BasisForced}. 

This suggests that finding tight bounds on the number of basis forced vertices in a given graph deserves attention. Hence, the aim of this work is to make significant contributions to this direction. Notice that in \cite{BagheriUnique16} Bagheri et al. studied graphs with unique metric bases, that is, graphs with a metric basis where all the vertices are basis forced. They provided bounds and posed a question on the maximum cardinality of a unique metric basis in any graph. In this paper, as a byproduct of Corollary~\ref{th:special-case-bound}, we are able to fully answer that open question. Some previous bounds for the number of basis forced vertices were already given in \cite{BasisForced}, and some other ones for specific families of graphs recently appeared in \cite{BasisForced-unic}, but the former ones (and more general), were indeed not the best possible. We significantly improve them in this work.  

Formally, given a connected graph $G=(V(G),E(G))$, it is said that a vertex $x\in V(G)$ {\em resolves} two vertices $u,v\in V(G)$ (or that $u,v$ are {\em resolved} by $x$), if $d_G(v,x)\ne d_G(u,x)$, where $d_G(y,z)$ represents the distance between $y$ and $z$, which is the number of edges in a shortest $y-z$ path in $G$. A set $R\subseteq V(G)$ is a \emph{resolving set} for $G$ if all the vertices of $G$ are pairwise resolved by a vertex of $R$. A resolving set having the smallest possible cardinality in $G$ is called a \emph{metric basis}. The cardinality of a metric basis is the \emph{metric dimension} of $G$, and denoted by $\dim(G)$.

Now, a \emph{basis forced vertex} of a graph $G$ is understood as a vertex $v\in V(G)$ such that it belongs to every metric basis of $G$. The term ``basis forced vertex'' was first used in \cite{BasisForced}, although it has some antecedents in the articles \cite{BagheriUnique16,BuczkowskiUnique},  where the graphs that have a unique metric basis were studied. From now on, we represent the number of basis forced vertices of $G$ by $\bfv(G)$. Notice that we can have $\bfv(G)=0$ for many graphs. For example, a nontrivial path $P$ has $\bfv(P)=0$. We also need the following concept from \cite{BasisForced}. A vertex $v\in V(G)$ is a \emph{void vertex} if it is in no metric basis of $G$. It is easy to see that there are $n-2$ void vertices in a path of order $n$; indeed, all the vertices not in the beginning or in the end of the path are such.

\subsection{Terminology and Notation}

Throughout our exposition, the graphs $G$, for which $\bfv(G)$ is studied, are nontrivial (that is, they have at least two vertices), finite, undirected and connected. The complete graph on $m$ vertices is denoted by $K_m$ and a path on $n$ vertices by $P_n$. The complement of a graph $G$ is denoted by $\overline{G}$. The cardinality of a set $X$ is denoted by $|X|$. The notation $n(G)$ means the order $|V(G)|$ of a graph $G$.
We use $uv$ as a shorthand notation for the edge $\{u,v\}$.

Given a graph $G$ and a set $R \subseteq V(G)$, in~\cite{BasisForced}, the \textit{colour graph} $G_R$ of $G$ (with respect to $R$) is defined as follows. Let $r \in R$. We denote
\[
\U_R (r) = \{ \{x,y\} \in V(G)^2 \ | \ d_G(r,x) \neq d_G(r,y) \text{ and } \forall \, t \in R \setminus \{r\} \colon d_G(t,x) = d_G(t,y) \}.
\]
In other words, the set $\U_R (r)$ consists of the pairs of vertices for which $r$ is the unique element in $R$ that resolves the pairs.

With this in mind, the graph $G_R$ has the vertex set $V(G_R)=V(G)$ and the edge set
\[
\bigcup\limits_{r\in R} \U_R (r).
\]
Each $r \in R$ is assigned a colour (or a label), and we colour the edges in $G_R$ given by $\U_R (r)$ with the colour associated with $r$. Observe that the graph $G_R$ can be disconnected while $G$ is connected. Moreover, note that this graph $G_R$ might be constructed for any set of vertices $R\subseteq V(G)$, although for our purposes, we will require that such $R$ is a resolving set (or a metric basis). If there is no edge between $x$ and $y$ in $G_R$, then there are at least two elements in a resolving set $R$ that resolve $x$ and $y$.  If $R$ is a metric basis of $G$, then the graph $G_R$ has at least one edge of the colour associated with each $r \in R$. In other words, the set $\U_R (r)$ is nonempty for all $r\in R$.

For an example (see~\cite{BasisForced}), consider the graph $G$ illustrated in Figure \ref{fig:colourExG} and its resolving set $R=\{r_1,r_2\}$.
For the colour graph $G_R$, we first form the following sets:
$\U_R (r_1) = \{\{r_1,v_1\}, \{r_1,v_3\},\{v_1,v_3\}\},\{v_2,v_4\}\}$ and $\U_R (r_2) = \{\{r_2,v_1\}, \{r_2,v_2\},\{v_1,v_2\}\},\{v_3,v_4\}\}$. Then we obtain the colour graph $G_R = (V(G_R), E(G_R))$, where $V(G_R) = V(G)$ and $E(G_R) = \U_R (r_1) \cup \U_R (r_2)$. In Figure \ref{fig:colourExGR} illustrating the colour graph $G_R$, the edges corresponding to $r_1$ and $r_2$ are associated with black and ``dashed'' edges, respectively.

\begin{figure}[t]
\centering
\begin{subfigure}[b]{0.45\linewidth}
\centering
\begin{tikzpicture}[scale=.6]
	\draw (0,2) -- (1,3) -- (1,1) -- (0,2) -- (-1,3) -- (-1,1) -- (0,2) -- (0,0) -- (1,1) -- (-1,1) -- (0,0) (-1,3) -- (1,3);
 	\draw \foreach \x in {(0,0),(-1,1),(1,1),(0,2)} {
 		\x node[circle, draw, fill=white, inner sep=0pt, minimum width=6pt] {}
 	};
 	\draw \foreach \x in {(-1,3),(1,3)} {
 		\x node[circle, draw, fill=black, inner sep=0pt, minimum width=6pt] {}
 	};
	\draw {
		(-1.55,3) node[] {$r_1$}
		(1.55,3) node[] {$r_2$}
		(0,-.5) node[] {$v_4$}
		(-1.55,1) node[] {$v_2$}
		(1.55,1) node[] {$v_3$}
		(0,2.5) node[] {$v_1$}
	};
 \end{tikzpicture}
 \caption{ }\label{fig:colourExG}
\end{subfigure}
\hfill
\begin{subfigure}[b]{0.45\linewidth}
\centering
\begin{tikzpicture}[scale=.6]
	\draw (-1,3) -- (1,1) -- (0,1.4) -- (-1,3) (0,0) -- (-1,1);
	\draw[dashed,thick] (1,3) -- (-1,1) -- (0,1.4) -- (1,3) (0,0) -- (1,1);
 	\draw \foreach \x in {(0,0),(-1,1),(1,1),(0,1.4)} {
 		\x node[circle, draw, fill=white, inner sep=0pt, minimum width=6pt] {}
 	};
 	\draw \foreach \x in {(-1,3),(1,3)} {
 		\x node[circle, draw, fill=black, inner sep=0pt, minimum width=6pt] {}
 	};
	\draw {
		(-1.55,3) node[] {$r_1$}
		(1.55,3) node[] {$r_2$}
		(0,-.5) node[] {$v_4$}
		(-1.55,1) node[] {$v_2$}
		(1.55,1) node[] {$v_3$}
		(0,.9) node[] {$v_1$}
	};
 \end{tikzpicture}
 \caption{ }\label{fig:colourExGR}
\end{subfigure}
\caption{(a) The graph $G$ with the resolving set $R = \{r_1,r_2\}$ and (b) the colour graph $G_R$.}
\label{fig:colourEx}
\end{figure}

\bigskip



\section{Properties of the Colour Graph and Bounds for the Metric Dimension}

The colour graph $G_R$ regarding a resolving set $R$ in a graph $G$ plays an important role in the proofs of this paper. Therefore, we begin by giving some properties of the graph $G_R$ stated in the  next technical lemmas. The results of the Lemma~\ref{lem:colorprops} are already known from \cite{BasisForced}, whereas the claims in Lemma~\ref{lem:colorprops-more} are new.
In this section, we also provide (in Proposition~\ref{prop:n-4}) bounds on the metric dimension $\dim(G)$ when there are basis forced vertices in $G$. We will need these bounds for our main results in Sections~\ref{sec:main} and~\ref{sec:realization}.

\begin{lem}[\cite{BasisForced}]
\label{lem:colorprops}
Let $G$ be a graph and let $R$ be a resolving set of $G$. Then the following properties hold for $G_R$.
\begin{enumerate}[label=\textnormal{(\roman*)}]
\item A colour that appears in a cycle of $G_R$ appears at least twice in that cycle.

\item Let $x,y,z \in V(G)$. If the edges $xy$ and $xz$ have the same colour in $G_R$, then the edge $yz$ also has the same colour in $G_R$.

\item If $b \in V(G)$ is a basis forced vertex of $G$ and $R$ is a metric basis of $G$, then the graph $G_R$ has at least two edges of the colour associated with $b$.

\item If $b \in V(G)$ is a basis forced vertex of $G$ and $R$ is a metric basis of $G$, then the graph $G_R$ has at least one edge $xy$, where $x,y \in V(G) \setminus R$, of the colour associated with $b$.

\item The set of vertices $R$ forms an independent set in $G_R$.

\item If there is an edge in $G_R$ incident to $r\in R$, then the edge has the colour associated with $r$.
\end{enumerate}
\end{lem}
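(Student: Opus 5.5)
I would prove the six items essentially independently, going straight from the definition of $\U_R(r)$; the only shared tool is the observation that the relation ``$d_G(t,x)=d_G(t,y)$ for all $t\in R\setminus\{r\}$'' is an equivalence relation on $V(G)$. I would start with (ii), since (i) follows from it.

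For (ii), suppose $xy,xz\in\U_R(r)$. Then every $t\in R\setminus\{r\}$ satisfies $d(t,y)=d(t,x)=d(t,z)$. Since $R$ is resolving, $y\neq z$ must be resolved by some element of $R$, and that element can only be $r$. Hence $yz\in\U_R(r)$.

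For (i), suppose colour $r$ appears exactly once on a cycle, on the edge $x y$. Each other edge $uv$ of the cycle has some colour $r'\neq r$, so $uv\in\U_R(r')$; in particular $r\notin R\setminus\{r'\}$, so $d(r,u)=d(r,v)$. Walking around the rest of the cycle from $y$ back to $x$ then gives $d(r,x)=d(r,y)$. This contradicts $xy\in\U_R(r)$.

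For (v), take $r,r'\in R$ with $r\neq r'$. Then $d(r,r)=0\neq d(r,r')$, and $r\in R\setminus\{r'\}$ whenever the pair is considered under colour $r'$. So $r$ resolves the pair as a non-unique element when the colour would be $r'$, and symmetrically $r'$ does so for colour $r$. Thus $rr'$ lies in no $\U_R$, and $R$ is independent in $G_R$.

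For (vi), suppose $ry\in\U_R(r')$ with $r'\neq r$. Then $r\in R\setminus\{r'\}$ must satisfy $d(r,r)=d(r,y)$, which forces $y=r$, a contradiction.

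For (iii) and (iv) the idea is the same: if the forced vertex $b$ had too few coloured edges, we would swap it out and obtain a metric basis avoiding $b$. Let $R$ be a metric basis, so $b\in R$ and $\U_R(b)$ is nonempty.

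For (iii), suppose $\U_R(b)=\{xy\}$. Then at least one of $x,y$, say $x$, is not $b$. The set $R'=(R\setminus\{b\})\cup\{x\}$ has the same size as $R$. Every pair other than $\{x,y\}$ is resolved by $R\setminus\{b\}$, and $x$ resolves $\{x,y\}$ since $d(x,x)=0\neq d(x,y)$. So $R'$ is a metric basis without $b$, a contradiction.

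This also shows that $x\notin R$, since otherwise $|R'|<|R|$ and $R'$ would be a resolving set smaller than a metric basis. Suppose now every $b$-coloured edge had an endpoint in $R$. By (vi), that endpoint must be $b$ itself, so all $b$-coloured edges are $bx_1,\dots,bx_m$ with $m\ge 2$ by (iii). But (ii) then makes $x_1x_2$ a $b$-coloured edge. This edge avoids $b$, so one of $x_1,x_2$ lies in $R\setminus\{b\}$, and by (vi) that edge would have to carry that element's colour, not $b$'s, a contradiction.

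For (iv), the case analysis just sketched does not close the argument: it only produces some $b$-coloured edge avoiding $b$, and shows that if such an edge avoids all of $R$ we are done. I expect this step to be the main obstacle. The gap is a $b$-coloured edge containing a vertex $r\in R\setminus\{b\}$; there (vi) applies, so the edge carries $r$'s colour and not $b$'s. I would therefore handle it directly, showing that any $b$-coloured edge meeting $R$ is of the form $bx$ with $x\notin R$. Combined with (iii) and (ii), this gives $x_1x_2$ with $x_1,x_2\notin R$, which is exactly (iv).
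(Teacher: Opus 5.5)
Your argument is correct. The paper does not prove this lemma itself: it imports (i)--(iv) from Lemma~25 of \cite{BasisForced} and (v)--(vi) from the basic observations in Section~4 there. Your direct verification from the definition of $\U_R(r)$ is the natural self-contained route.

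The ``gap'' you flag in (iv) is not actually there, because your preceding paragraph already finishes (iv). By (vi) and (v), every $b$-coloured edge that meets $R$ has the form $bx$ with $x\notin R$. So either some $b$-coloured edge avoids $R$ altogether, or (iii) supplies two distinct edges $bx_1, bx_2$. In the second case, (ii) yields the $b$-coloured edge $x_1x_2$ with $x_1,x_2\notin R$. The situation you worry about, a $b$-coloured edge through some $r\in R\setminus\{b\}$, is excluded outright by (vi), as you yourself note.

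Two cosmetic points:
\begin{itemize}
\item In (v) you should also dispose of colours $s\notin\{r,r'\}$. This is trivial, since both $r$ and $r'$ then lie in $R\setminus\{s\}$ and resolve the pair.
\item Your proof of (i) uses only the definition of $\U_R$, not (ii), contrary to your opening remark.
\end{itemize}
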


\begin{proof} The items (i)--(iv) are from Lemma~25 in \cite{BasisForced} and the items (v)--(vi) are basic observations of the colour graph discussed at the beginning of Section 4 in \cite{BasisForced}.
\end{proof}

We next show some new properties of the colour graph $G_R$ of a graph $G=(V(G),E(G))$ with respect to some resolving set or metric basis $R$. If we replace a vertex $u$ in $R$ by a vertex $v\in V(G)$ (while keeping $R$ otherwise intact), then we denote the new set by $R[u\leftarrow v]$. In other words,  $$R[u\leftarrow v]=(R\setminus\{u\})\cup \{v\}.$$

\begin{lem}
\label{lem:colorprops-more}
Let $G$ be a graph and let $R$ be a resolving set of $G$. Then the following properties hold for $G_R$.
\begin{enumerate}[label=\textnormal{(\roman*)}]
    \item If $b \in V(G)$ is a basis forced vertex of $G$ and $R$ is a metric basis of $G$, then the graph $G_R$ has at least two edges $xy$, where $x,y \in V(G) \setminus R$, of the colour associated with $b$.
    
    \item If $R$ is a metric basis of $G$ and the only edge of the colour associated with $r \in R$ in $G_R$  is of type $rx$, where $x \in V(G) \setminus R$, then $R[r\leftarrow x]$ is a metric basis of $G$.

\end{enumerate}
\end{lem}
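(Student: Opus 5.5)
For (ii), set $R' = R[r\leftarrow x]$. Since $|R'| = |R|$, it is enough to show that $R'$ is resolving; it is then automatically a metric basis. Take two distinct vertices $u,v$. If some $t\in R\setminus\{r\}$ resolves them, then $t\in R'$ still does. Otherwise $r$ is the unique element of $R$ resolving $u,v$, because $R$ is resolving. Then $\{u,v\}\in\U_R(r)$, so by hypothesis $\{u,v\}=\{r,x\}$. The vertex $x\in R'$ resolves this pair, since $d_G(x,x)=0\neq d_G(r,x)$. Hence $R'$ is resolving.

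For (i), let $R$ be a metric basis containing $b$. By Lemma~\ref{lem:colorprops}(iv) there is at least one edge $xy$ of colour $b$ with $x,y\notin R$. Suppose, for contradiction, that it is the only one. By Lemma~\ref{lem:colorprops}(v),(vi), every other edge of colour $b$ joins $b$ to a vertex outside $R$. These edges cannot be of the form $bz$ with $z\in R\setminus\{b\}$: such a $z$ resolves the pair $\{b,z\}$, so $b$ is not its unique resolver.

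Now consider the two vertices $x,y$. Suppose $bx$ were an edge of colour $b$. Then by Lemma~\ref{lem:colorprops}(ii), applied to $xy$ and $xb$, the edge $yb$ would also have colour $b$. So the edges of colour $b$ are $xy$, possibly $bx$ and $by$ together, and some edges $bz$ with $z\notin\{x,y\}$. For any such $z$, the pair $\{x,z\}$ is not of colour $b$. Since $x$ and $z$ both agree with $b$ on every $t\in R\setminus\{b\}$, they also agree with each other there, so $\{x,z\}$ is unresolved by $R\setminus\{b\}$. It must therefore be resolved by $b$, and hence it would be an edge of colour $b$, a contradiction. So no such $z$ exists, and the colour-$b$ edges are either just $\{xy\}$ or the triangle $\{xy,bx,by\}$.

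In either case I replace $b$ by $x$ and check that $R[b\leftarrow x]$ is resolving, exactly as in (ii). Any pair resolved uniquely by $b$ is $\{x,y\}$, $\{b,x\}$ or $\{b,y\}$.
\begin{itemize}
\item The pair $\{x,y\}$ and the pair $\{b,x\}$ are resolved by $x$ itself, at distance $0$ versus a positive distance.
\item For the pair $\{b,y\}$, which arises only in the triangle case, I need $d(x,b)\ne d(x,y)$.
\end{itemize}
That last inequality is the main obstacle. If it fails, I would instead replace $b$ by $y$, which handles $\{b,x\}$ only when $d(y,b)\ne d(y,x)$. If both fail, then $d(x,b)=d(x,y)=d(y,b)$.

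To rule this case out I would use $x$ and $y$ together. The vertices $b,x,y$ all have identical distance vectors to $R\setminus\{b\}$, and every other vertex is resolved from them by $R\setminus\{b\}$. So within $\{b,x,y\}$ the new landmark must separate all three; note that $b$ itself distinguishes them, since $b$ resolves each of these pairs. With the equal-distance condition, $d(x,b)=d(x,y)$ means $x$ fails on $\{b,y\}$, and symmetrically $y$ fails on $\{b,x\}$. I would therefore look for a third replacement, possibly a vertex adjacent to $x$ on a shortest $x$–$b$ path, or else show that an equilateral configuration of this kind contradicts $b$ being basis forced. If that cannot be done, the fallback claim is that a single replacement by $x$ or $y$ always works. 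I expect this case analysis to be the delicate point of the proof. In all successful cases the result is a metric basis avoiding $b$, which contradicts $b$ being basis forced, and so there must be at least two colour-$b$ edges with both ends outside $R$.
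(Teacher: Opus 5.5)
Your part (ii) is correct and is the paper's argument. Part (i) uses the paper's strategy: you exhibit a metric basis of the form $R[b\leftarrow\cdot]$ that avoids $b$. However, it has two genuine gaps.

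First, your claim that no colour-$b$ edge $bz$ with $z\notin\{x,y\}$ exists is not justified. It rests on ``$x$ and $z$ both agree with $b$ on every $t\in R\setminus\{b\}$''. For $x$ this holds only if $\{b,x\}$ itself has colour $b$, i.e.\ only in the triangle case. Outside the triangle case the configuration $\{xy,bz\}$ genuinely occurs, and your elimination step never uses that $b$ is forced. For example, take $C_5$ with vertices $0,1,2,3,4$ in cyclic order and the metric basis $R=\{0,1\}$. The colour-$0$ edges are exactly $\{0,2\}$ and $\{3,4\}$. So your dichotomy ``just $\{xy\}$ or the triangle'' is false, and the case of edges $xy$ and $bz$ must be handled. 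The paper does it as follows.
\begin{itemize}
\item Two such edges $bz,bz'$ would give a colour-$b$ edge $zz'$ outside $R$ by Lemma~\ref{lem:colorprops}(ii).
\item With exactly one such edge, the pairs resolved only by $b$ are $\{x,y\}$ and $\{b,z\}$. If none of $R[b\leftarrow x]$, $R[b\leftarrow y]$, $R[b\leftarrow z]$ is resolving, then $d(x,z)=d(x,b)$, $d(y,z)=d(y,b)$ and $d(z,x)=d(z,y)$. Chaining these gives $d(b,x)=d(b,y)$, contradicting that $b$ resolves $x$ and $y$.
\end{itemize}

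Second, the triangle case, which you leave open as the ``main obstacle'', closes in one line by the same observation. If both $R[b\leftarrow x]$ and $R[b\leftarrow y]$ fail, you have already derived $d(x,b)=d(x,y)=d(y,b)$. But $xy$ has colour $b$, so $d(b,x)\neq d(b,y)$, a contradiction. Hence one of these two replacements is a metric basis avoiding $b$. No third landmark and no analysis of the ``equilateral'' configuration is needed. As written, your proof of (i) is incomplete in both of these cases.
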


\begin{proof}
(i) Let us assume that $b\in V(G)$ is a basis forced vertex. By Lemma~\ref{lem:colorprops}(iv) we know that there is at least one edge, say $xy$ such that $\{x, y\} \subseteq V(G)\setminus R$ associated with the colour $b$ and, by Lemma~\ref{lem:colorprops}(iii), there is also at least one more edge, say $wz$, of the same colour. If $\{w,z\}\subseteq V(G)\setminus R$, we are done. Clearly, by Lemma~\ref{lem:colorprops}(v),  we cannot have $\{w,z\}\subseteq R$, so it suffices to assume that $w\in R$. Moreover, $w=b$ by Lemma~\ref{lem:colorprops}(vi). Next we consider separately two possible cases, namely,  $z\in\{x,y\}$ or $z\notin \{x,y\}.$ 

\smallskip
\noindent
\emph{Case 1}: $z\notin \{x,y\}.$  If there is a third edge of colour $b$, then it is enough to assume that it is $bz'$ where $z'\in V(G)\setminus R$. However, in that case, by Lemma~1(ii) there exists an edge $zz'$ with $z, z' \in V(G) \setminus R$ and the claim follows. Hence, we may assume that $xy$ and $bz = wz$ are the only edges of colour $b$. The set $R[b\leftarrow x]$ cannot be a metric basis, since $b$ is a basis forced vertex which must be in every metric basis. Therefore, $d_G(x,z)=d_G(x,b)$ as the pair $z$ and $b$ is the only one that cannot be resolved with respect to $R[b\leftarrow x]$ (indeed, all the other pairs of vertices in $V(G)$ apart from the pair $b$ and $z$ and the pair $x$ and $y$ were resolved by other elements of $R$ than $b$). Similarly, as $R[b\leftarrow y]$ (resp. $R[b\leftarrow z]$) cannot be a metric basis, we have $d_G(y,z)=d_G(y,b)$ (resp. $d_G(z,x)=d_G(z,y)$). These three distance equations imply together that $d_G(b,y)=d_G(z,y)=d_G(z,x)=d_G(b,x).$ However, this is a contradiction, since $b$ was the (only) vertex resolving the vertices $y$ and $x$, that is $d_G(b,y)\neq d_G(b,x)$.

\smallskip
\noindent
\emph{Case 2}: $z\in\{x,y\}$. Without loss of generality, assume $z=x$. Recall that by Lemma~\ref{lem:colorprops}(ii) there is a third edge associated with the colour $b$, namely, $by$ in $G_R$. If there is yet another (fourth) edge of colour $b$, then we can assume that it is $bz'$ where $z'\in V(G)\setminus R$. Due to Lemma~\ref{lem:colorprops}(ii), this implies that there is  an edge $z'x$ with $x, z' \in V(G) \setminus R$ of colour $b$, and we are done. Hence we may assume that we have only the three edges mentioned above. Because $b$ is a basis forced vertex, the set $R[b\leftarrow y]$ is not a metric basis and, hence, $d_G(y,b)=d_G(y,x)$. Analogously, $R[b\leftarrow x]$ cannot be a metric basis.  Hence, we obtain $d_G(x,b)=d_G(x,y)$. Now we have $d_G(b,x)=d_G(x,y)=d_G(b,y)$, a contradiction, since $b$ resolves the pair $x$ and $y$.

\medskip

(ii) Let $R$ be a metric basis (when $R$ in only a resolving set, the claim follows by an analogous argument). Assume further that $r\in R$, and $rx$ with $x\notin R$ is the only edge of colour associated with $r$ (clearly, $r$ is not a basis forced vertex). If we remove $r$ from $R$, then the only pair of vertices that can have the same distances to all the elements of $R\setminus\{r\}$ is $r$ and $x$. But with respect to the set $R[r\leftarrow x]$, they have different distances to $x$. Therefore, the set $R[r\leftarrow x]$ is a metric basis in $G$.
\end{proof}

The next result shows that if we have basis forced vertices in a graph $G$, then there are some limitations for the metric dimension $\dim(G)$.

\begin{proposition}
\label{prop:n-4}
If $G$ is a graph such that $\bfv(G) \geq 1$, then $2 \leq \dim(G) \leq n-4$. 
\end{proposition}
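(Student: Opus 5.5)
The lower bound is immediate. If $\dim(G)=1$, then $G$ is a path (a classical fact). A nontrivial path has two metric bases, namely the two singletons consisting of its end vertices, so $\bfv(G)=0$. Hence $\bfv(G)\ge 1$ forces $\dim(G)\ge 2$.

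For the upper bound, fix a metric basis $R$ with $|R|=k$ and a basis forced vertex $b\in R$. By Lemma~\ref{lem:colorprops-more}(i), $G_R$ has two distinct edges $xy$ and $x'y'$ of colour $b$ with all endpoints in $V(G)\setminus R$.
\begin{itemize}
\item If these edges are disjoint, then $|V(G)\setminus R|\ge 4$, and so $\dim(G)\le n-4$.
\item Otherwise they share a vertex; say the edges are $xy$ and $xz$. Lemma~\ref{lem:colorprops}(ii) then gives that $yz$ also has colour $b$, so $\{x,y,z\}$ spans a monochromatic triangle outside $R$.
\end{itemize}
It remains to rule out the situation $V(G)\setminus R=\{x,y,z\}$ exactly, i.e.\ $\dim(G)=n-3$.

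Assume $V(G)\setminus R=\{x,y,z\}$. I would list all colour-$b$ edges. By Lemma~\ref{lem:colorprops}(v),(vi), the only edges of colour $b$ are the triangle $xyz$ and possibly some of $bx$, $by$, $bz$. By Lemma~\ref{lem:colorprops}(ii), if $bx$ is present then so are $by$ and $bz$, because $xb$ and $xy$ having colour $b$ forces $by$, and so on. So the colour-$b$ class is either the triangle $\{x,y,z\}$ or the clique on $\{b,x,y,z\}$.

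Since $b$ is forced, each of $R[b\leftarrow x]$, $R[b\leftarrow y]$ and $R[b\leftarrow z]$ fails to resolve $G$. The only pairs not resolved by $R\setminus\{b\}$ are the colour-$b$ pairs, so each failure yields a distance equality, as in the proof of Lemma~\ref{lem:colorprops-more}(i).
\begin{itemize}
\item In the triangle-only case, the failure of $R[b\leftarrow x]$ gives $d(x,y)=d(x,z)$, and symmetrically $d(y,x)=d(y,z)$ and $d(z,x)=d(z,y)$. Hence $x,y,z$ are pairwise equidistant, say at common distance $t$.
\item In the $K_4$ case, each replacement must fail on one pair among the other three vertices, which gives equalities such as $d(x,y)=d(x,z)$ or $d(x,b)=d(x,y)$, and so on.
\end{itemize}
In both cases I would then exploit that $d(b,x)$, $d(b,y)$, $d(b,z)$ are pairwise distinct, since $b$ resolves every pair of the triangle. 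Every other $r\in R$ is equidistant from $x$, $y$ and $z$.

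The main obstacle is turning these equalities into a contradiction. The approach I would try first is the following.
\begin{itemize}
\item Order the values so that $d(b,x)<d(b,y)<d(b,z)$; then $d(b,z)\ge d(b,x)+2$.
\item Take a shortest $b$–$z$ path. Its interior vertices lie in $(R\setminus\{b\})\cup\{x,y\}$.
\item A neighbour $r\in R\setminus\{b\}$ of $b$ has a common value $c=d(r,x)=d(r,y)=d(r,z)$ within $1$ of each of the three distinct distances from $b$. This forces those distances to be three consecutive integers with $c$ in the middle.
\item Combine this with the equidistance of $x,y,z$ (or with the $K_4$ equalities) and the triangle inequality to get a contradiction; in the triangle case, compare $t$ with $d(b,z)-d(b,x)=2$.
\end{itemize}
If this case analysis becomes unwieldy, a fallback is the known characterization of graphs with $\dim(G)=n-3$ (Hernando et al.): check directly that none of those graphs has a basis forced vertex, because each metric basis vertex can be swapped with a twin-like vertex.

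Either route yields $\dim(G)\neq n-3$. We already have $\dim(G)\ne n-1$ (for $K_n$ every vertex set of size $n-1$ is a basis) and $\dim(G)\ne n-2$ by the two-edge count above, so $\dim(G)\le n-4$.
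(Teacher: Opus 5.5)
Your lower bound, the reduction to $V(G)\setminus R=\{x,y,z\}$, the description of the colour-$b$ class (the triangle, or the $K_4$ on $\{b,x,y,z\}$), and the distance equalities you extract from the failure of $R[b\leftarrow w]$ are all correct. The gap is the step you flag yourself: you never derive the contradiction that excludes $\dim(G)=n-3$, and the tools you propose do not supply one. A neighbour of $b$ in $R\setminus\{b\}$ need not exist, since a shortest $b$--$z$ path may leave $b$ through $x$. Even when such a neighbour exists, you learn only that $d(b,x),d(b,y),d(b,z)$ are consecutive and that $x,y,z$ are pairwise at distance $t$. The triangle inequality then gives just $t\ge 2$, which is consistent with $d(b,z)-d(b,x)=2$. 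The fallback through a characterization of graphs with $\dim(G)=n-3$ is a case check you have not carried out. As written, the upper bound is therefore not proved.

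The paper closes this case with an ingredient you never use: minimality of $R$, applied to the other colours.
\begin{enumerate}
\item Each $r\in R\setminus\{b\}$ must have an edge of its own colour in $G_R$. That edge can only be $rw$ with $w\in\{x,y,z\}$, and it is unique, since two such edges would force an $r$-coloured edge inside the $b$-coloured triangle by Lemma~\ref{lem:colorprops}(ii).
\item Hence $d(b,r)=d(b,w)$, so every vertex other than $b$ lies at one of only three distances from $b$. With $d(b,x)<d(b,y)<d(b,z)$, connectivity forces these distances to be $1,2,3$.
\item Take a shortest path $bx'y'z$. Replacing $x'$ by $x$ and then $y'$ by $y$ (Lemma~\ref{lem:colorprops-more}(ii)) gives a metric basis $R''$ with complement $\{x',y',z\}$.
\item Then $R''[b\leftarrow z]$ is a metric basis avoiding $b$, because $z$ is at distances $1,2,3$ from $y',x',b$. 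This contradicts $b$ being forced.
\end{enumerate}

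Alternatively, the facts you already collected suffice with a short argument.

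\textbf{Triangle case.} Let $s$ precede $y$ on a shortest $b$--$y$ path; then $s=x$ or $s\in R\setminus\{b\}$. If $s=x$, then $d(x,y)=1$, so $d(x,z)=d(x,y)=1$ by your equality from $R[b\leftarrow x]$. If $s\in R\setminus\{b\}$, then $s$ is equidistant from $x,y,z$ and hence adjacent to $z$. Either way $d(b,z)\le d(b,y)$, a contradiction.

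\textbf{$K_4$ case.} Every $r\in R\setminus\{b\}$ is equidistant from $b,x,y,z$. So no such $r$ is adjacent to $b$, since otherwise $d(b,z)\le 2$. Since $y$ and $z$ are not adjacent to $b$ either, $N(b)=\{x\}$. Any $r\in R\setminus\{b\}$, which exists because $\dim(G)\ge 2$, then satisfies $d(r,b)=d(r,x)+1$, contradicting $d(r,b)=d(r,x)$.
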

\begin{proof}
Assume that $G$ is a graph with at least one basis forced vertex. Recall that paths are the only graphs with $\dim(G) = 1$. Hence, as the paths do not have basis forced vertices, it immediately follows that $\dim(G) \geq 2$. Let us look at the claim $\dim(G)\le n-4$. Suppose to the contrary that  $\dim(G)\ge n-3$ (and $\bfv(G) \geq 1$). Let $R$ be a metric basis of $G$. By Lemma~\ref{lem:colorprops-more}(i), for each basis forced vertex $u$, there exist at least two edges associated with the colour $u$ in $G_R$ between vertices of $V (G) \setminus R$. Hence, $\dim(G) \leq n-3$ as otherwise at most one edge can occur in $V (G) \setminus R$. Thus, $\dim(G) = n-3$ and there are exactly three vertices, say $x$, $y$ and $z$, in $V (G)\setminus R$. In addition, we know that there  exists a unique basis forced vertex in $G$, say $b$. Indeed, if $\bfv(G)>1$, then Lemma~\ref{lem:colorprops-more}(i) would imply that $\dim(G)\le n-4.$

By Lemma~\ref{lem:colorprops-more}(i), there exist at least two edges associated with the colour $b$ in $G_R$ between the vertices of $V(G) \setminus R$. Without loss of generality, we may assume that $xy$ and $yz$ are such edges. By Lemma~\ref{lem:colorprops}(ii), this further implies that $xz$ is also such an edge. Let $r$ be a vertex in $R \setminus \{b\}$. Since $R$ is a metric basis of $G$, there exists at least one edge in $G_R$ associated with the colour $r$. Due to the fact that the edges between $x$, $y$ and $z$ are associated with the colour $b$, the edges of the colour $r$ have to be of type $rw$, where $w \in \{x,y,z\}$ due to claims (vi) and (v) in Lemma~\ref{lem:colorprops}. Moreover, there exists exactly one such edge since otherwise a contradiction (with the fact that the edges between $x$, $y$ and $z$ are coloured with $b$) follows by Lemma~\ref{lem:colorprops}(ii). For each $w \in \{x,y,z\}$, we define
\[
R_w = \{w\} \cup \{u \in R \setminus \{b\} \mid uw \in E(G_R)\}\text.
\]
It is immediate that the sets $\{b\}$, $R_x$, $R_y$ and $R_z$ form a partition of $V(G)$. Furthermore, we have the following observations on $R_w$:
\begin{itemize}
	\item[(1)] By Lemma~\ref{lem:colorprops-more}(ii), the set $R[w \leftarrow v]$ is a metric basis for each $v \in R_w \setminus \{w\}$.
	\item[(2)] For each $r \in R_w \setminus \{w\}$, we have $d_G(b,w) = d_G(b,r)$ as the vertices $r$ and $w$ are solely resolved by $r$. Thus, the basis forced vertex $b$ has the same distance to all $u \in R_w$.
\end{itemize}

Since the vertices $x$, $y$ and $z$ are resolved from each other by the vertex $b$, we may without loss of generality assume that $d_G(b,x) < d_G(b,y) < d_G(b,z)$. Thus, based on the partition $\{b\}$, $R_x$, $R_y$ and $R_z$, we have that $d_G(b,x) = 1$, $d_G(b,y) = 2$ and $d_G(b,z) = 3$. Hence, by the observation~(2) above, it happens $d_G(b,r_x) = 1$, $d_G(b,r_y) = 2$ and $d_G(b,r_z) = 3$ for all $r_x \in R_x \setminus \{x\}$, $r_y \in R_y \setminus \{y\}$ and $r_z \in R_z \setminus \{z\}$.

Let $bx'y'z$ be a shortest path of length $3$ from $b$ to $z$. It is immediate that $x' \in R_x$ and $y' \in R_y$; note that $x'$ and $y'$ can be $x$ and $y$, respectively. By the observation~(1), $R' = R[x' \leftarrow x]$ is a resolving set of $G$. As above, we may deduce that the edges $x'y$, $x'z$ and $yz$ have colour $b$ in $G_{R'}$. Moreover, in $G_{R'}$ there exists exactly one edge of colour $y'$, namely, $yy'$. Therefore, by the observation~(1), $R'' = R'[y' \leftarrow y]$ is a metric basis of $G$. Now $V(G) \setminus R'' = \{x',y',z\}$. Furthermore, $R''[b \leftarrow z]$ is a metric basis of $G$ since $d_G(z,y') = 1$, $d_G(z,x') = 2$ and $d_G(z,b) = 3$. However, this contradicts the fact that $b$ is a basis forced vertex. Thus, the claim follows.
\end{proof}

Notice that there exist graphs attaining the upper bound of the proposition. For example, the graph $G$ of order $6$ in Figure~\ref{fig:colourEx} satisfies $\bfv(G) = 2 (> 1)$ and $\dim(G) = 2 = n-4$. In Section~\ref{CharSec}, we give a full characterization of the graphs with $\bfv(G) = 2$ and $\dim(G) =  n-4$.


\section{The Main Bounds for the Maximum Number of Basis Forced Vertices} 
\label{sec:main}

The main result of this section presents a bound for the number of basis forced vertices of a graph $G$ in terms of the metric dimension $\dim(G)$ and the order $n$ of $G$ (see Theorem~\ref{th:main-bound}). As a special case, another bound for such quantity is given only in terms of the order of $G$ (see Corollary~\ref{th:special-case-bound}). 

\smallskip

A \emph{cactus} is a graph in which no two cycles share an edge. In what follows, we want to show that a cactus graph appears as a certain subgraph of the colour graph $G_R$. To make the proof of Lemma~\ref{lem:H-properties} simpler, we present the following lemma, the proof of which is based on an iterative idea with an algorithmic flavour.
The lemma shows that given a graph with two cycles sharing an edge, we can obtain a pair of cycles such that their intersection is a path (forming a so called theta graph).
Recall that we define \emph{cycle} as a closed walk with no repeated vertices.
In the next lemma, we use the term \emph{simple cycle} to emphasise the fact that cycles have no repeated vertices. Moreover, we ``abuse'' the notation and use the intersection notation $C_1 \cap C_2$ of two cycles (as subgraphs) to represent the subgraph induced by the vertices of the intersection $V(C_1) \cap V(C_2)$.

\begin{lem}
\label{lem:cactus-theta}
Let $G$ be a graph and let $C_1$ and $C_2$ be simple cycles in $G$.
If $C_1 \cap C_2$ contains at least one edge, then there exist a third cycle $C_3$ in $G$ such that $C_1 \cap C_3$ is a nontrivial path $($a path of length at least $2$$)$. 
\end{lem}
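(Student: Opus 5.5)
The plan is to build $C_3$ from one ``ear'' of $C_2$ relative to $C_1$, together with one of the two arcs of $C_1$ between the ear's endpoints.

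First I would fix the setting. We may assume $C_1 \neq C_2$, since otherwise $C_1 \cap C_2 = C_1$ and there is nothing to extract; I read the statement with $C_2$ distinct from $C_1$. I will treat $C_1 \cap C_3$ as the subgraph spanned by the common vertices, with its edges taken from $C_1$. This matches how the lemma is used later, where only the intersection of the cycles as subgraphs matters.

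The first step is to find the ear. Since $C_2 \neq C_1$, some edge of $C_2$ does not lie on $C_1$. Since $C_1 \cap C_2$ contains an edge, $C_2$ also meets $C_1$ in at least two vertices. Walking along $C_2$ starting from an edge not on $C_1$, in both directions, until $V(C_1)$ is first hit, we obtain a subpath $P$ of $C_2$ with the following properties:
\begin{itemize}
\item its endpoints $u \neq v$ lie on $C_1$;
\item its internal vertices (if any) avoid $V(C_1)$;
\item at least one of its edges is not an edge of $C_1$.
\end{itemize}
The endpoints are distinct because $C_2$ passes through at least two vertices of $C_1$. The path $P$ is either a chord-like edge $uv \notin E(C_1)$, or a path whose interior lies outside $C_1$.

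The second step is to choose the arc. The vertices $u$ and $v$ split $C_1$ into two internally disjoint $u$--$v$ arcs $A$ and $A'$. Then $C_3 = P \cup A$ is a simple cycle: $P$ and $A$ share only $u$ and $v$, and together they have at least three vertices. The common vertices of $C_1$ and $C_3$ are exactly $V(A)$, and the edges of $C_1$ among them are exactly those of $A$, provided that $A'$ has an interior vertex. So I pick $A$ such that the other arc $A'$ has length at least $2$.
\begin{itemize}
\item If $u$ and $v$ are adjacent on $C_1$, take $A$ to be the long arc, of length $|C_1|-1 \ge 2$. Then $A' = uv$, and since $A$ does not contain the edge $uv$, the intersection is the path $A$, which has at least three vertices.
\item If $u$ and $v$ are not adjacent on $C_1$, both arcs have length at least $2$, and either choice gives a path of length at least $2$.
\end{itemize}

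The main obstacle I expect is making the extraction of $P$ fully rigorous. This includes the degenerate situation in which $V(C_2) \subseteq V(C_1)$, so that $P$ is a single chord edge. It also includes being careful about what ``intersection'' means. If it were the induced subgraph in $G$, chords of $G$ could spoil the path property, and I would then replace $P$ and $A$ iteratively by shorter ones (this is the ``algorithmic'' shrinking hinted at in the text) until no chord remains. Concretely, I would first take $P$ to be a shortest ear and then argue by minimality that the resulting intersection is a path.
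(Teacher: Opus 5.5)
Your ear construction is sound and takes a genuinely different route from the paper's, but the case where $u$ and $v$ are adjacent on $C_1$ is wrong as written.

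\textbf{The adjacent case.} You declared $C_1\cap C_3$ to be the subgraph of $C_1$ spanned by the common vertices. You also noted, correctly, that this equals $A$ only when $A'$ has an interior vertex. Choosing $A$ to be the long arc violates your own proviso. Then $V(C_1)\cap V(C_3)=V(A)=V(C_1)$, so under your reading $C_1\cap C_3$ is all of $C_1$, which is a cycle. The justification ``since $A$ does not contain $uv$'' silently switches to a different reading, in which the edges of the intersection are $E(C_1)\cap E(C_3)$. You need to commit to one reading.
\begin{itemize}
\item If you take $C_1\cap C_3=(V(C_1)\cap V(C_3),\,E(C_1)\cap E(C_3))$, the long arc is correct. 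In this case $P$ has length at least $2$, since a one-edge $P$ would be the edge $uv$ of $C_1$. Hence $uv\notin E(C_3)$, and the intersection is the path $A$ with $\abs{V(C_1)}-1\ge 2$ edges. This reading is all that Lemma~\ref{lem:H-properties}(ii) uses: the edges of $P$ lie on both cycles, no other edge of $C_1$ lies on the second cycle, and $V(P)$ is a run of $k\ge 2$ consecutive vertices of $C_1$. The case $k=\abs{V(C_1)}$ causes no trouble there.
\item If you keep the reading induced in $C_1$, you must take $A=uv$, and the intersection is then a single edge. That suffices for the paper. Its proof stops as soon as $C_1\cap C_2$ is a path, a single shared edge included, and the application only needs $k>1$; so ``length at least $2$'' there must count vertices. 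Under this reading two edges cannot be guaranteed at all: if $C_1$ is a triangle, any cycle meeting it in three vertices meets it in all of $C_1$.
\end{itemize}

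\textbf{Your last paragraph.} Drop it. Under the reading where the intersection is induced in $G$, the statement is false, so no shrinking of $P$ can rescue it. For example, take $K_4$ with $C_1$ a $4$-cycle and $C_2$ a triangle sharing two edges with it. Every cycle has at least three vertices, all lying in $V(C_1)=V(K_4)$, so they induce a complete graph in $G$.

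\textbf{Comparison with the paper.} With the adjacent case repaired, your argument is shorter than the paper's. The paper reroutes $C_2$ along $C_1$ between its first two shared sections (Cases~1 and~2). It then iterates, tracking that the number of shared sections drops at each step. You instead exhibit the theta graph $C_1\cup P$ directly, with $A$, $A'$ and $P$ as three internally disjoint $u$--$v$ paths. This takes a single step and needs only $C_2\neq C_1$ and $\abs{V(C_1)\cap V(C_2)}\ge 2$.
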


\begin{proof}
If the intersection $C_1 \cap C_2$ is a nontrivial path, then we are done.
Otherwise $C_1 \cap C_2$ contains two or more connected components. 
We denote the vertices of $C_1$ by $v_1, v_2, \dots, v_{n_1}$ starting from the beginning of a section that $C_1$ shares with $C_2$, or more formally, $v_1 v_2 \in E(C_2)$ and $v_{n_1}v_1 \notin E(C_2)$.
Such edges exist because $C_1$ and $C_2$ share at least one edge and they are distinct cycles.
We use the notation $x \xrightarrow{C_1} y$ to denote the path from $x$ to $y$ in which the vertices are in ascending order with respect to their indices. 
The notation $x \xrightarrow{C_1} x$ is just the vertex $x$.
The concatenation of paths $x \xrightarrow{C_1} y$ and $y \xrightarrow{C_1} z$ is denoted naturally by $x \xrightarrow{C_1} y \xrightarrow{C_1} z$.
    
Let $i$ be the largest index such that the path $v_1 \xrightarrow{C_1} v_i$ is contained in $C_2$.
Let $j>i$ be the smallest index and $k \ge j$ the largest index such that the path $v_j \xrightarrow{C_1} v_k$ is contained in $C_2$.
In other words, the paths $v_1 \xrightarrow{C_1} v_i$ and $v_j \xrightarrow{C_1} v_k$ are the first two connected components that the cycles $C_1$ and $C_2$ have in common with respect to the numbering of the vertices of $C_1$.
Notice that it is possible that $j = k$.
The vertices $v_{i+1}, \dots, v_{j-1}$ (if there are any) are not in the cycle $C_2$.
    
We number the vertices of $C_2 = u_1u_2 \cdots u_{n_2}u_1$ so that the numbering satisfies $u_1 = v_1$, $u_2 = v_2$, ...,  $u_i = v_i$.
The rest of the vertices along $C_2$ are then numbered in order. 
Then we define the notation $x \xrightarrow{C_2} y$ analogously to $x \xrightarrow{C_1} y$.

The choice of a new cycle $C'_2$ depends on how the shared paths $v_1\cdots v_i$ and $v_j\cdots v_k$ are arranged in $C_2$.

\smallskip
\noindent
\emph{Case 1}:
If the vertex $v_j$ appears before $v_k$ in the numbering of $C_2$, or if $v_j = v_k$ (see Figure~\ref{fig:cycles}(a)), we choose
$$C_2' = v_1 \xrightarrow{C_2} v_i \xrightarrow{C_1} v_j \xrightarrow{C_2} v_k \xrightarrow{C_2} v_1.$$

\begin{figure}[t]
    \centering
    \begin{subfigure}[t]{0.45\textwidth}
        \centering
        \begin{tikzpicture}[yscale=1, xscale=1, rotate=0]
        	
        	\renewcommand*{\EdgeLineWidth}{ 0.5pt}
        	\SetVertexMath
        	\SetVertexLabelOut
        	
        	{
        		\SetVertexNoLabel
        		\tikzstyle{VertexStyle}=[minimum size=1pt,inner sep=0pt]
        		
        		\Vertex[x=0, y=0]{1}
        		\Vertex[x=1, y=0]{2}
        		\Vertex[x=3, y=0]{3}
        		\Vertex[x=4, y=0]{4}

        		\Vertex[x=2.85, y=1.22]{5}
        		\Vertex[x=3.8, y=1.01]{6}
        		
        	}

        	{
        		\tikzstyle{VertexStyle}=[fill=white,circle]
        		\SetVertexLabelIn
        		\Vertex[x=2, y=1.25, L=\dots]{dots1}
        		\Vertex[x=2, y=-1.2, L=\dots]{dots2}
        		\Vertex[x=-1, y=0.5, L=C_1]{C1}
        		\Vertex[x=-1, y=-.5, L=C_2]{C2}
        	}

        	\tikzstyle{EdgeStyle}=[]
        	\Edge(1)(2)
        	\Edge(2)(3)
        	\Edge(3)(4)
        	
        	\tikzstyle{EdgeStyle}=[out = 160, in = 180, looseness=1.8]
        	\Edge(1)(dots1)
        	
        	\tikzstyle{EdgeStyle}=[out = 20, in = 0, looseness=1.8]
        	\Edge(4)(dots1)
        	
        	\tikzstyle{EdgeStyle}=[out = 0, in = 180, looseness=0.8, cycle2]
        	\Edge(2)(5)
        	\Edge(3)(6)
        	\Edge(1)(2)
        	\Edge(3)(4)
        	
        	\tikzstyle{EdgeStyle}=[bend left = 7, cycle2]
        	\Edge(5)(6)
        	
        	\tikzstyle{EdgeStyle}=[out = 0, in = -20, looseness=1.8, cycle2]
        	\Edge(dots2)(4)
        	
        	\tikzstyle{EdgeStyle}=[out = 200, in = 180, looseness=1.8, cycle2]
        	\Edge(1)(dots2)

        	\tikzstyle{VertexStyle}=[circle,draw, fill=white,inner sep=0pt, minimum width=6pt]
        	\SO[unit=0, Lpos=-90](1){v_1}
        	\SO[unit=0, Lpos=-90](2){v_i}
        	\SO[unit=0, Lpos=-90](3){v_j}
        	\SO[unit=0, Lpos=-90](4){v_k}
        	
        	{
        		\SetVertexNoLabel
        		\SO[unit=0](5){aaa}
        		\SO[unit=0](6){bb}
        	}

        \end{tikzpicture}
        \caption{}
     \end{subfigure}
     ~
     \begin{subfigure}[t]{0.45\textwidth}
        \centering
        \begin{tikzpicture}[yscale=1, xscale=1, rotate=0]
        	
        	\renewcommand*{\EdgeLineWidth}{ 0.5pt}
        	\SetVertexMath
        	\SetVertexLabelOut
        	
        	{
        		\SetVertexNoLabel
        		\tikzstyle{VertexStyle}=[minimum size=1pt,inner sep=0pt]
        		
        		\Vertex[x=0, y=0]{1}
        		\Vertex[x=1, y=0]{2}
        		\Vertex[x=3, y=0]{3}
        		\Vertex[x=4, y=0]{4}
        		
        	}

        	{
        		\tikzstyle{VertexStyle}=[fill=white,circle]
        		\SetVertexLabelIn
        		\Vertex[x=2, y=1.25, L=\dots]{dots1}
        		\Vertex[x=0.55, y=-1.2, L=\dots]{dots2}
        		\Vertex[x=3.45, y=-1.2, L=\dots]{dots3}
        		\Vertex[x=-1, y=0.5, L=C_1]{C1}
        		\Vertex[x=-1, y=-.5, L=C_2]{C2}
        	}

        	\tikzstyle{EdgeStyle}=[]
        	\Edge(1)(2)
        	\Edge(2)(3)
        	\Edge(3)(4)
        	
        	\tikzstyle{EdgeStyle}=[out = 160, in = 180, looseness=1.8]
        	\Edge(1)(dots1)
        	
        	\tikzstyle{EdgeStyle}=[out = 20, in = 0, looseness=1.8]
        	\Edge(4)(dots1)
        	
        	\tikzstyle{EdgeStyle}=[out = 0, in = 180, looseness=0.8, cycle2]
        	\Edge(2)(dots3)
        	\Edge(dots2)(3)
        	\Edge(1)(2)
        	\Edge(3)(4)
        	
        	\tikzstyle{EdgeStyle}=[out = 0, in = 0, looseness=1.8, cycle2]
        	\Edge(dots3)(4)
        	
        	\tikzstyle{EdgeStyle}=[out = 180, in = 180, looseness=1.8, cycle2]
        	\Edge(1)(dots2)

        	\tikzstyle{VertexStyle}=[circle,draw, fill=white, inner sep=0pt, minimum width=6pt]
        	\SO[unit=0, Lpos=90](1){v_1}
        	\SO[unit=0, Lpos=90](2){v_i}
        	\SO[unit=0, Lpos=90](3){v_j}
        	\SO[unit=0, Lpos=90](4){v_k}

        \end{tikzpicture}
        \caption{}
     \end{subfigure}
    \caption{The cycles $C_1$ and $C_2$ can intersect in essentially two different ways. The cycle $C_2$ is drawn with a dashed line. The lines represent nontrivial paths $($not necessarily edges$)$, with the exception of the line from $v_j$ to $v_k$ if $v_j = v_k$.}
    \label{fig:cycles}
\end{figure}

\smallskip
\noindent
\emph{Case 2}:
If the vertex $v_j$ appears after $v_k$ in the numbering of $C_2$ (see Figure~\ref{fig:cycles}(b)), we choose
$$C_2' = v_1 \xrightarrow{C_2} v_i \xrightarrow{C_1} v_j \xrightarrow{C_2} v_1.$$

Let us observe that the closed walk $C_2'$ is a simple cycle in both cases.
Indeed, there are no repeated vertices in the sections of the form $* \xrightarrow{C_2} *$ (apart from $v_1$) because $C_2$ is a simple cycle.
By the choice of $v_i$ and $v_j$, the path $v_i \xrightarrow{C_1} v_j$ does not intersect $C_2$ in vertices other than its endpoints.
    
In both cases, the intersection $C_1 \cap C_2'$ has at least one fewer connected component than $C_1 \cap C_2$ does.
The disconnected paths $v_1 \xrightarrow{C_1} v_i$ and $v_j \xrightarrow{C_1} v_k$ are replaced by one (nontrivial) path, $v_1 \xrightarrow{C_1} v_k$ (in Case 1) or $v_1 \xrightarrow{C_1} v_j$ (in Case 2). 
In addition, the path $v_i \xrightarrow{C_2} v_j$ (or $v_i \xrightarrow{C_2} v_k$) may intersect with $C_1$ in vertices other than the endpoints (see Figure~\ref{fig:cycles}(a)). 
Such intersecting sections are absent from $C'_2$.

If the intersection of $C_1$ and $C'_2$ is a nontrivial path, then the cycle $C'_2$ satisfies the claim.
If not, then we repeat the procedure described above for $C_1$ and $C'_2$ with the same numbering of $C_1$, and $v_k$ (or $v_j$) in the role of $v_i$. After a finite number of iterations, we obtain a cycle $C_3$ that satisfies the claim.
\end{proof}

From now on, given a graph $G$ and a resolving set $R$ of $G$, we will consider a subgraph of $G_R$ that will be ``conveniently chosen'' for our purposes. We consider a subgraph $H_R$ of the colour graph $G_R$ defined as follows. The vertex set of $H_R$ is $V(H_R) = V(G) \setminus R$ and the edge set $E(H_R)$ is a (possibly not proper) subset of $E(G_R)$ satisfying that there are either $0$ or $2$ edges of each colour $r\in R$ in $E(H_R)$, i.e., there are either $0$ or $2$ edges of each color in $H_R$.

Despite the fact that several possible configurations for such subgraph $H_R$ of $G_R$ could exist, we next show that the connected components of each $H_R$ are bipartite cacti.

\begin{lem}
\label{lem:H-properties}
Let $G$ be a graph and let $R \subseteq V(G)$ be a resolving set of $G$. Then the following claims follow for $H_R$. 
\begin{enumerate}[label=\textnormal{(\roman*)}]
\item The graph $H_R$ is bipartite.
\item The connected components of $H_R$ are cacti.
\end{enumerate}
\end{lem}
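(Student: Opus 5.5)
The plan is to run everything through one observation. For $v\in V(G)$ write $D(v)=(d_G(r,v))_{r\in R}$ for its distance vector. If $xy$ is an edge of $G_R$ of colour $r$, then $D(x)$ and $D(y)$ differ in the coordinate $r$ and agree in every other coordinate. Hence traversing an edge of colour $r$ changes only the $r$-th coordinate of the distance vector, by the nonzero amount $d_G(r,y)-d_G(r,x)$. Around any closed walk in $H_R$ the total signed change in each coordinate is $0$. Since $R$ is resolving, two vertices with equal distance vectors are equal.

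For (i), I would use Lemma~\ref{lem:colorprops}(i) rather than distances. Let $C$ be a cycle of $H_R$, which is also a cycle of $G_R$.
\begin{itemize}
\item By Lemma~\ref{lem:colorprops}(i), every colour appearing on $C$ appears at least twice on $C$.
\item By the definition of $H_R$, there are at most two edges of each colour in all of $H_R$.
\end{itemize}
So every colour on $C$ appears exactly twice, and $|E(C)|$ is twice the number of colours on $C$, hence even. A graph with no odd cycles is bipartite.

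For (ii), suppose some component of $H_R$ is not a cactus. Then there are two cycles sharing an edge, and Lemma~\ref{lem:cactus-theta} gives cycles $C_1,C_3$ whose intersection is a nontrivial path $P$ with distinct endpoints $u,v$. Write $C_1=P\cup Q_1$ and $C_3=P\cup Q_2$, where $Q_1,Q_2$ are $u$–$v$ paths internally disjoint from $P$. I expect $Q_1$ and $Q_2$ to be internally disjoint from each other as well, since $C_1\cap C_3=P$ as an induced intersection. In any case they are edge-disjoint, because an edge lying in both would lie in $C_1\cap C_3=P$.

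Take any edge $e\in E(P)$, say of colour $r$. As in part (i), its partner $e'$ (the other $r$-edge of $H_R$) lies on $C_1$ and also on $C_3$. Since $Q_1$ and $Q_2$ share no edge, $e'\in E(P)$. So every colour occurring on $P$ has both of its edges on $P$, and on no other edge of $C_1$.

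Now fix $r\in R$ and compare the coordinate-$r$ change along $P$ from $u$ to $v$ with that around $C_1$. Only $r$-coloured edges change coordinate $r$, and all $r$-edges of $C_1$ lie in $P$. So the coordinate-$r$ change along $P$ equals the total change around the closed cycle $C_1$, which is $0$. This holds for every $r$, so $D(u)=D(v)$, and since $R$ is resolving, $u=v$. This contradicts $P$ being a nontrivial path.

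The main obstacle is making the theta-graph bookkeeping precise: confirming that $C_1\setminus E(P)$ and $C_3\setminus E(P)$ are edge-disjoint $u$–$v$ paths given the induced-intersection convention of Lemma~\ref{lem:cactus-theta}. Everything else is the potential argument above.
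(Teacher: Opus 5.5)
Your proof is correct and follows essentially the same route as the paper. Part (i) uses Lemma~\ref{lem:colorprops}(i) together with the at-most-two-edges-per-colour rule. For part (ii), you apply Lemma~\ref{lem:cactus-theta}, show that both edges of every colour on $P$ must lie on $P$, and conclude that the endpoints of $P$ have equal distance vectors; your telescoping ``potential'' phrasing is just a cleaner version of the paper's $\pm j$ bookkeeping on the distance sequence around $C_1$.
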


\begin{proof}
(i) Since $H_R$ is a subgraph of $G_R$, if $C$ is a cycle in $H_R$, then it is also a cycle in $G_R$. By Lemma~\ref{lem:colorprops}(i), a colour that appears in a cycle of $G_R$ appears at least twice in that cycle. Hence, by the choice of edges of $H_R$, a colour that appears in a cycle of $H_R$, appears exactly twice in that cycle. Therefore, as all edges have a colour, all cycles in $H_R$ have even length. It is well known that this implies that $H_R$ is bipartite.

\medskip
(ii) Suppose to the contrary that $H_R$ is not composed of cacti, in other words, that there exist cycles in $H_R$ that have edges in common. By Lemma~\ref{lem:cactus-theta}, we may assume that $C_1$ and $C_2$ are cycles such that their intersection is a path, denoted $P = C_1 \cap C_2$. Now consider any colour $r$ that appears on $P$. Clearly (as shown above), such colour $r$ must appear exactly twice in the cycle $C_1$, by the choice of the edges of $H_R$. If $r$ appears in $C_1\setminus P$, then $C_2$ is a cycle in which $r$ appears only once, a contradiction. Therefore, the second occurrence of $r$ must be in $P$ as well.

Denote the vertices of $C_1$ by $w_1, w_2, \dots, w_m$ so that vertices $w_{i}$ and $w_{i+1}$ are adjacent (with the notation that $w_{m+1} = w_{1}$) and assume that $V(P) = \{w_1, \ldots, w_k\}$ where $k>1$.  Now consider the sequence of distances $d_G(r, w_1), d_G(r, w_2), \dots, d_G(r, w_m), d_G(r, w_1)$ for some $r \in R$. A difference in consecutive distances $d_G(r, w_{i})$ and $d_G(r, w_{i+1})$ (say, $d_G(r, w_{i}) - d_G(r, w_{i+1}) = j \ne 0$) corresponds to an $r$-coloured edge $w_{i} w_{i+1}$ in $E(H_R)$. If consecutive distances are equal, the corresponding edge in $H_R$ has a colour other than $r$. By the choice of the edges of $H_R$, there can be at most two indices $i$ and $i'$ where consecutive distances differ. Since the first and last elements of the sequence are equal, we must have $d_G(r, w_{i}) = d_G(r, w_{i+1}) + j$ and $d_G(r, w_{i'}) = d_G(r, w_{i'+1}) - j$ for some $j \ne 0$.

For colours $s \in R$ that do not appear in $P$, we have $d_G(s, w_1) = d_G(s, w_2) = \cdots = d_G(s, w_k)$. For colours $r \in R$ that do appear in $P$, we have $d_G(r, w_1) = d_G(r, w_k) + j - j = d_G(r, w_k)$, since $r$ appears twice in $P$. Now the distinct vertices $w_1$ and $w_k$ have $d_G(u, w_1) = d_G(u, w_k)$ for all $u \in R$, contradicting the assumption that $R$ is a resolving set of $G$. Therefore, cycles in $H_R$ do not have edges in common, in other words, $H_R$ is composed of cacti.
\end{proof}

The following result regarding the largest number of edges in a graph whose components are bipartite cacti shall be needed.

\begin{lem}[{\cite[Lemma~2]{Hernando19-LD}}]
    \label{lem:cactus-bound}
    If $H$ is a bipartite graph with $\abs{V(H)} \geq 4$ such that all of its connected components are cacti, then $\frac{3}{4} \abs{E(H)} + 1 \leq \abs{V(H)}$.
\end{lem}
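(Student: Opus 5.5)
The plan is a block-decomposition count, carried out separately in each connected component and then summed. The key observation is that in a cactus every block is either a single edge (a bridge) or a cycle. Bipartiteness then forces every cycle block to have even length, hence length at least $4$.

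Let $H_1,\dots,H_c$ be the connected components of $H$, where $c \ge 1$ since $\abs{V(H)}\ge 4$. Write $n_i=\abs{V(H_i)}$ and $m_i=\abs{E(H_i)}$.

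First I will use two standard facts about the block structure of a connected graph. The edge sets of the blocks partition the edge set, so $m_i=\sum_B \abs{E(B)}$. The block--cut tree is a tree, which gives $n_i-1=\sum_B(\abs{V(B)}-1)$. The second identity can be proved by induction: adding a leaf block to the tree attaches it at one cut vertex, so it adds $\abs{V(B)}-1$ new vertices.

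Next I will compare the two sums block by block, summing over the blocks $B$ of $H_i$:
\begin{itemize}
\item A bridge block contributes $1$ edge and $1$ new vertex.
\item A cycle block of length $\ell$ contributes $\ell$ edges and $\ell-1$ new vertices.
\end{itemize}
Since $\ell\ge 4$ by bipartiteness, we get $\ell/(\ell-1)\le 4/3$. In both cases, therefore, $\abs{E(B)}\le \tfrac43(\abs{V(B)}-1)$.

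Summing over the blocks of $H_i$ gives $\tfrac34 m_i\le n_i-1$. Summing over the components then gives
\[
\tfrac34\abs{E(H)} \le \abs{V(H)}-c \le \abs{V(H)}-1,
\]
which is the claim.

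The only step needing care is the vertex identity $n_i-1=\sum_B(\abs{V(B)}-1)$, and I would justify it via the block--cut tree induction above. The hypothesis $\abs{V(H)}\ge 4$ is not really needed beyond ensuring $H$ is nonempty, so that $c\ge1$. Equality holds exactly when $H$ is connected and every block is a $4$-cycle.
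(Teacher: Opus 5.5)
Your proof is correct. The paper does not prove this lemma; it quotes it from Hernando et al.\ \cite{Hernando19-LD}. Your block-by-block count is therefore a self-contained proof, not a variant of an argument in the text. Two details deserve a line each:
\begin{itemize}
\item The paper defines a cactus as a graph in which no two cycles share an edge, so you should justify that every block is $K_1$, $K_2$ or a cycle. A $2$-connected block $B$ that is not a cycle contains a cycle $C$ and a $C$-ear $P$: either a chord of $C$, or a path joining two distinct vertices of $C$ that is internally disjoint from $C$. The two cycles formed by $P$ with the two arcs of $C$ share the edges of $P$, which the definition forbids.
\item Isolated vertices are trivial blocks $K_1$. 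They satisfy your per-block inequality as $0\le 0$, so the bound is unaffected. But once you drop the hypothesis $\abs{V(H)}\ge 4$, your equality characterization needs $\abs{V(H)}\ge 2$, since $K_1$ also attains equality.
\end{itemize}
Your observation that $\abs{V(H)}\ge 4$ is superfluous is right. It also shows that, in the proof of Theorem~\ref{th:main-bound}, the appeal to Proposition~\ref{prop:n-4} only serves to match the lemma as stated, because $H_R$ already has $2\bfv(G)\ge 2$ edges.

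For comparison, the paper's edge-disjointness definition gives a shorter route that avoids blocks entirely. Let $C_1,\dots,C_t$ be the cycles of $H$; by definition they are pairwise edge-disjoint. Delete one edge from each $C_j$ in turn.
\begin{itemize}
\item No component is disconnected, because at the moment of deletion the edge still lies on the intact cycle $C_j$.
\item No cycle survives, because any cycle of the remainder is some $C_j$, and each has lost an edge.
\end{itemize}
So the result is a spanning forest, and $\abs{E(H)}=\abs{V(H)}-c+t$. Edge-disjointness together with bipartiteness (every cycle has length at least $4$) gives $4t\le\abs{E(H)}$. Combining the two yields $\tfrac34\abs{E(H)}\le\abs{V(H)}-c$. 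Your block count uses a little more machinery, but it makes the extremal structure (every block a $C_4$) transparent.
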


With the tools above in hand, we are then able to present our main theorem. By the notation $G[V(G)\setminus S]$ we mean the graph induced by the vertices of $G$ not including the set $S\subseteq V(G)$.

\begin{theorem}
\label{th:main-bound}
Let $G$ be a graph of order $n$ with $\bfv(G) \ge 1$. Then
    $$\bfv(G)\le \frac{2}{3}(n-\dim(G)-1).$$
\end{theorem}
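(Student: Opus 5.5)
Fix a metric basis $R$ of $G$, so $|R|=\dim(G)$, and let $B\subseteq R$ be the set of basis forced vertices, $|B|=\bfv(G)$. The plan is to build a specific subgraph $H_R$ of the colour graph $G_R$ and then bound its number of edges by Lemma~\ref{lem:cactus-bound}.

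For each $b\in B$, Lemma~\ref{lem:colorprops-more}(i) gives at least two edges of colour $b$ in $G_R$ with both endpoints in $V(G)\setminus R$. Choose exactly two of them for each $b\in B$. Let $H_R$ have vertex set $V(G)\setminus R$ and edge set the union of these chosen pairs. Every colour $r\in R$ then appears $0$ or $2$ times, so $H_R$ is an admissible choice in the sense defined before Lemma~\ref{lem:H-properties}. Since distinct colours give distinct edges (each edge of $G_R$ carries exactly one colour, because its pair is resolved by a unique element of $R$), we get $|E(H_R)|=2\bfv(G)$.

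By Lemma~\ref{lem:H-properties}, $H_R$ is bipartite and each of its components is a cactus. We also have $|V(H_R)|=n-\dim(G)$, and Proposition~\ref{prop:n-4} gives $\dim(G)\le n-4$, so $|V(H_R)|\ge 4$. Lemma~\ref{lem:cactus-bound} then yields
\[
\tfrac34\cdot 2\bfv(G)+1\le n-\dim(G),
\]
that is, $\bfv(G)\le \tfrac23(n-\dim(G)-1)$.

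The only delicate points are checking that the chosen edges are genuinely distinct and that the hypothesis $|V(H)|\ge 4$ of Lemma~\ref{lem:cactus-bound} holds. Proposition~\ref{prop:n-4} handles the latter, and the uniqueness of colours handles the former. The real difficulty was already absorbed into Lemmas~\ref{lem:colorprops-more}(i) and~\ref{lem:H-properties}.
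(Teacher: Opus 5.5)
Your proposal is correct and follows the paper's proof essentially step for step. You build $H_R$ from two $b$-coloured edges inside $V(G)\setminus R$ for each basis forced vertex $b$ via Lemma~\ref{lem:colorprops-more}(i), obtain $|V(H_R)|\ge 4$ from Proposition~\ref{prop:n-4}, and conclude with Lemmas~\ref{lem:H-properties} and~\ref{lem:cactus-bound}. Your added remark that the sets $\U_R(r)$ are pairwise disjoint, which gives $|E(H_R)|=2\bfv(G)$, makes explicit a point the paper leaves implicit.
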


\begin{proof}
Let $R \subseteq V(G)$ be a metric basis of $G$. Let $H_R$ be a subgraph of $G_R$ (as previously described) with vertices $V(H_R) = V(G) \setminus R$ and for the edges of $H_R$, we select two edges of $G_R$ with the colour $b$ for each basis forced vertex $b$. 
Indeed, at least two such edges exist in $G_R[V(G) \setminus R]$ by Lemma~\ref{lem:colorprops-more}(i) for each basis forced vertex.
By definition, $\abs{V(H_R)} = n - \dim(G)$ and $\abs{E(H_R)} = 2\bfv(G)$.
Since we assumed that $\bfv(G) \ge 1$, Proposition~\ref{prop:n-4} gives us that $\dim(G) \le n-4$, and consequently, $\abs{V(H_R)} \ge 4$.
By Lemma~\ref{lem:H-properties}, the graph $H_R$ is bipartite and its connected components are cacti.
We can now use Lemma~\ref{lem:cactus-bound} to get
\begin{equation*}
    \frac{3}{4} \cdot 2\bfv(G) + 1 \leq  n - \dim(G)
    \quad \Rightarrow \quad \bfv(G) \leq \frac{2}{3} (n - \dim(G) - 1),
\end{equation*}
which completes the proof.
\end{proof}

In~\cite{BagheriUnique16}, Bagheri \emph{et al.} studied those graphs with a unique metric basis, that is, graphs $G$ satisfying $\dim(G) = \bfv(G)$. Among other results, they showed that for any even $k \geq 2$ there exists a graph $G$ such that $k = \bfv(G) = \dim(G) = 2(n-1)/5$, where $n$ denotes the order of $G$. Furthermore, they stated as an open question whether the metric dimension could be larger with respect to $n$. In the following corollary, we answer the question by stating that this is not possible.
\begin{corollary}
\label{th:special-case-bound}
Let $G$ be a graph of order $n$. Then
\begin{equation} \label{eq:2-5-bound}
    \bfv(G)\le \left\lfloor \frac{2}{5}(n-1) \right\rfloor.    
\end{equation}
Moreover, if $\bfv(G) = \left\lfloor\frac{2}{5}(n-1)\right\rfloor$ and  $n \equiv 1 \text{ or } 4 \pmod{5}$, then $G$ has a unique metric basis.
\end{corollary}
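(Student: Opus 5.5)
The plan is to combine Theorem~\ref{th:main-bound} with the trivial inequality $\bfv(G)\le \dim(G)$. Every basis forced vertex lies in each metric basis, so $\bfv(G)\le\dim(G)$.

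If $\bfv(G)=0$ there is nothing to prove, since $n\ge 2$ gives $\lfloor 2(n-1)/5\rfloor\ge 0$. Otherwise, write $b=\bfv(G)$ and $k=\dim(G)$. Theorem~\ref{th:main-bound} gives $3b\le 2(n-k-1)$, and substituting $k\ge b$ yields $3b\le 2(n-1)-2b$. Hence $5b\le 2(n-1)$, and since $b$ is an integer, $b\le\lfloor 2(n-1)/5\rfloor$.

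For the second statement, suppose $b=\lfloor 2(n-1)/5\rfloor\ge 1$. From $3b\le 2(n-1)-2k$ we get
\[
k\le (n-1)-\tfrac32 b .
\]
If $G$ did not have a unique metric basis, then some vertex of a metric basis would not be forced, so $k\ge b+1$. This would give $5b+2\le 2(n-1)$, that is, $b\le (2(n-1)-2)/5$. It therefore suffices to check that $5\lfloor 2(n-1)/5\rfloor > 2(n-1)-2$ when $n\equiv 1$ or $4 \pmod 5$.

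Set $m=n-1$. If $n\equiv 1 \pmod 5$, then $m\equiv 0$ and $2m\equiv 0$, so $5b=2m>2m-2$. If $n\equiv 4 \pmod 5$, then $m\equiv 3$ and $2m\equiv 1 \pmod 5$, so $5b=2m-1>2m-2$. In both cases we reach a contradiction, so $\dim(G)=\bfv(G)$ and every metric basis equals the set of basis forced vertices, i.e.\ the metric basis is unique.

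One edge case remains. Could the floor be $0$ here, making the uniqueness claim about $\bfv(G)=0$? For $n\equiv 1$ or $4$ with $n\ge 4$ we have $\lfloor 2(n-1)/5\rfloor\ge 1$. For $n=1$ the graph is trivial and excluded. So the argument above always applies.

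The only delicate point is this residue arithmetic; everything else follows directly from Theorem~\ref{th:main-bound}.
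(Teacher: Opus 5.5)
Your proposal is correct and follows essentially the same route as the paper: you combine Theorem~\ref{th:main-bound} with $\bfv(G)\le\dim(G)$ and then do residue arithmetic modulo $5$. The only differences are presentational. The paper tabulates $f\le\dim(G)\le\lfloor n-\tfrac32 f-1\rfloor$ for all five residues, whereas you argue by contradiction from $\dim(G)\ge\bfv(G)+1$ and check only the two relevant residues. You also explicitly dispose of the case $\bfv(G)=0$, which the paper leaves implicit even though Theorem~\ref{th:main-bound} assumes $\bfv(G)\ge 1$.
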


\begin{proof}
The first result follows immediately by the bound of Theorem~\ref{th:main-bound}, due to the fact that $\dim(G)\ge \bfv(G)$. Based on Theorem~\ref{th:main-bound}, we further obtain that $\dim(G) \leq n- 3\bfv(G)/2 - 1$. Therefore, assuming $\bfv(G) = \lfloor \frac{2}{5}(n-1) \rfloor$ and denoting $\bfv(G)$  by $f$, we have 
\begin{equation} \label{Eq:upper_lower_unique}
f \leq \dim(G) \leq \left\lfloor n - \frac{3}{2}f - 1 \right\rfloor \text.
\end{equation}
The proof now divides into different cases based on the remainder of $n$ when divided by $5$. The lower and upper bounds in these cases are given in Table~\ref{Table:nmod5}. Thus, by the table, we obtain that if $n \equiv 1 \text{ or } 4 \pmod{5}$, then $\dim(G) = f$ and $G$ has a unique metric basis.
\begin{table} 
		\begin{center}
			\begin{tabular}{c|c|c}
				$n$     & $f = \lfloor 2(n-1)/5 \rfloor$ & $\lfloor n - 3f/2 - 1 \rfloor$ \\ \hline
				$5q$    &    $2q-1$  &    $2q$    \\
				$5q+1$  &    $2q$    &    $2q$    \\
				$5q+2$  &    $2q$    &    $2q+1$    \\
				$5q+3$  &    $2q$    &    $2q+2$    \\
				$5q+4$  &    $2q+1$    &    $2q+1$    \\
			\end{tabular}	
		\end{center}
		\caption{The upper and lower bounds on $\dim(G)$ of Inequality~\eqref{Eq:upper_lower_unique} given for different remainders when divided by $5$.} \label{Table:nmod5}
	\end{table}
\end{proof}

Recall that if $\bfv(G) = \left\lfloor\frac{2}{5}(n-1)\right\rfloor$ and  $n \equiv 1 \text{ or } 4 \pmod{5}$, then $\dim(G) = \bfv(G)$ (and $G$ has a unique metric basis). However, this is not the case for $n \equiv 0, 2 \text{ or } 3 \pmod{5}$. By Table~\ref{Table:nmod5}, we observe that the following values are possible for $\dim(G)$: if $n \equiv 0 \text{ or } 2 \pmod{5}$, then $\dim(G) \in \{2|V(G)|/5,\ 2|V(G)|/5+1\}$, and $\dim(G) \in \{2|V(G)|/5,\ 2|V(G)|/5+1,\ 2|V(G)|/5+2\}$ for $n \equiv 3 \pmod{5}$. Later, in Theorem~\ref{thm:realization}, it will be shown that all these values can be realized, i.e., there exist graphs for all the possible combinations of $\dim(G)$ and $\bfv(G)$. 
For example, if $n = 5q + 3$, then (using the notation introduced in the proof of Theorem~\ref{thm:realization}) each of the graphs $G_{2q,0,2}$, $G_{2q,1,1}$, and $G_{2q,2,0}$ has $5q +3$ vertices of which $\lfloor 2(n-1)/5 \rfloor = 2q$ are basis forced, and their metric dimensions are $2q$, $2q +1$,  and $2q+2$, respectively.

\section{Realization of Possible Parameters}
\label{sec:realization}

In the previous sections, we have considered the relations between $|V(G)| = n$, $\dim(G)$ and $\bfv(G)$. In particular, we obtained the following lower and upper bounds for $\dim(G)$:
\begin{itemize}
    \item If $\bfv(G) = 1$, then $2 \leq \dim(G) \leq n-4$ by Proposition~\ref{prop:n-4}.
    \item If $\bfv(G) > 1$, then $\bfv(G) \leq \dim(G) \leq \left\lfloor n-\frac{3}{2}\bfv(G)-1 \right\rfloor$ by Theorem~\ref{th:main-bound}.
\end{itemize}
In what follows, we show that $\dim(G)$ achieves all the values indicated by the previous inequalities.

We build a graph step by step, first establishing the basis forced vertices, then building up the metric dimension, and finally, adding a path to reach the desired number of vertices.
In the construction, we need the concept of a \textit{universal vertex}, which is a vertex that is adjacent to all other vertices of the graph.
For the realization, we borrow two graphs from Bagheri et al. \cite{BagheriUnique16} and Buczkowski et al. \cite{BuczkowskiUnique}, illustrated in Figure~\ref{fig:2-3-bfvs}.
Formally, we define the graph
$\bfvgraph{2} = (V(\bfvgraph{2}), E(\bfvgraph{2}))$, where $V(\bfvgraph{2}) = \{x_1, x_2, y_0, y_1, y_2, y_3\}$ and $$E(\bfvgraph{2}) = \{y_i y_j \mid 0 \leq i < j \leq 3\} \cup \{x_1 y_1, x_1y_3, x_2y_2, x_2y_3 \}.$$
The graph $\bfvgraph{2}$ has a unique metric basis $\{x_1, x_2\}$ (as shown in~\cite{BuczkowskiUnique}).
Consequently, the vertices $x_1$ and $x_2$ are basis forced vertices.
The vertex $y_3$ is universal.

Then, we define $\bfvgraph{3} = (V(\bfvgraph{3}), E(\bfvgraph{3}))$, where $V(\bfvgraph{3}) = \{v_1, v_2, v_3, w_1, w_2, w_3, w_4, w_5, w_6\}$ and $$E(\bfvgraph{3}) = \{w_i w_j \mid 1 \leq i < j \leq 6\} \cup \{v_i w_j  \mid 1 \leq i \leq 3,\ j= i,\ i+1,\ i+2 \}.$$ The graph $\bfvgraph{3}$ has a unique metric basis $\{v_1, v_2, v_3\}$ (as shown in~\cite{BagheriUnique16}).
Consequently, the vertices $v_1$, $v_2$ and $v_3$ are basis forced vertices.
The vertex $w_3$ is universal.
The graphs $\bfvgraph{2}$ and $\bfvgraph{3}$ have a maximum number of basis forced vertices with respect to the order of the graph, meeting the bound of Corollary~\ref{th:special-case-bound}.

\begin{figure}
     \centering
     \begin{subfigure}[t]{0.45\textwidth}
        \centering
        \begin{tikzpicture}[yscale=1, xscale=1, rotate=0]
        	
        	\renewcommand*{\EdgeLineWidth}{ 0.5pt}
        	\SetVertexMath
        	\SetVertexLabelOut
        	
        	{
        		\SetVertexNoLabel
        		\tikzstyle{VertexStyle}=[minimum size=1pt,inner sep=0pt]
        		
        		\Vertices{circle}{2,3,1,0}
        		\Vertex[x=-1, y=1.5]{11}
        		\Vertex[x=1, y=1.5]{22}
        		
        	}

        	\tikzstyle{LabelStyle}=[fill=white, circle]
        	
        	\tikzstyle{EdgeStyle}=[]
        	\Edge(11)(3)
        	\Edge(11)(1)
        	\Edge(22)(2)
        	\Edge(22)(3)
        	
        	\Edge(0)(1)
        	\Edge(0)(2)
        	\Edge(0)(3)
        	\Edge(1)(2)
        	\Edge(1)(3)
        	\Edge(2)(3)

        	\tikzstyle{VertexStyle}=[circle,draw, fill=black, inner sep=0pt, minimum width=6pt]
        	\SO[unit=0, Lpos=90](11){x_1}
        	\SO[unit=0, Lpos=90](22){x_2}
        	
        	\tikzstyle{VertexStyle}=[circle,draw, fill=white,inner sep=0pt, minimum width=6pt]
        	\SO[unit=0, Lpos=90](3){y_3}
        	\SO[unit=0, Lpos=0](2){y_2}
        	\SO[unit=0, Lpos=180](1){y_1}
        	\SO[unit=0, Lpos=-90](0){y_{0}}
        	
        \end{tikzpicture}
        
		\caption{A graph on $6$ vertices with $2$ basis forced vertices.}

     \end{subfigure}
     ~
     \begin{subfigure}[t]{0.45\textwidth}
        \centering
        \begin{tikzpicture}[yscale=-1.5, xscale=1, rotate=0]
        	
        	\renewcommand*{\EdgeLineWidth}{ 0.5pt}
        	\SetVertexMath
        	\SetVertexLabelOut
        	
        	{
        		\SetVertexNoLabel
        		\tikzstyle{VertexStyle}=[minimum size=1pt,inner sep=0pt]
        		
        		\Vertex[x=0, y=0]{11}
        		\Vertex[x=1.5, y=0]{22}
        		\Vertex[x=3, y=0]{33}
        		
        		\Vertex[x=0, y=1]{1}
        		\Vertex[x=1, y=1]{2}
        		\Vertex[x=2, y=1]{3}
        		\Vertex[x=3, y=1]{4}
        		\Vertex[x=4, y=1]{5}
        		\Vertex[x=5, y=1]{6}
        		
        	}
        	
        	{
        		\tikzstyle{VertexStyle}=[fill=white, inner sep = 1pt]
        		\SetVertexLabelIn
        		\Vertex[x=-0.7, y=1.1, L={K_6}]{Km}
        	}
        	
        	\tikzstyle{LabelStyle}=[fill=white, circle]
        	
        	\tikzstyle{EdgeStyle}=[]
        	\Edge(11)(1)
        	\Edge(11)(2)
        	\Edge(11)(3)
        	
        	\Edge(22)(2)
        	\Edge(22)(3)
        	\Edge(22)(4)
        	
        	\Edge(33)(3)
        	\Edge(33)(4)
        	\Edge(33)(5)

        	\tikzstyle{VertexStyle}=[circle,draw, fill=black, inner sep=0pt, minimum width=6pt]
        	\SO[unit=0, Lpos=90](11){v_1}
        	\SO[unit=0, Lpos=90](22){v_2}
        	\SO[unit=0, Lpos=90](33){v_3}

        	\tikzstyle{VertexStyle}=[circle,draw, fill=white,inner sep=0pt, minimum width=6pt]
        	\SO[unit=0, Lpos=-90](1){w_1}
        	\SO[unit=0, Lpos=-90](2){w_2}
        	\SO[unit=0, Lpos=-90](3){w_3}
        	\SO[unit=0, Lpos=-90](4){w_4}
        	\SO[unit=0, Lpos=-90](5){w_5}
        	\SO[unit=0, Lpos=-90](6){w_6}

        	\node[draw=black, rectangle, dashed, fit=(Km) (w_6), inner sep=6pt, rounded corners, thin] (machine) {};
        	
        \end{tikzpicture}
        
		\caption{A graph on $9$ vertices with $3$ basis forced vertices.}
     \end{subfigure}
        
     \caption{}
     \label{fig:2-3-bfvs}
\end{figure}

The following iterative construction can be used to obtain a graph with any number of basis forced vertices ($\geq 2$).

\begin{lemma}[{\cite[Theorem 7]{BagheriUnique16}}]
    \label{lem:recursive-construction}
    Let $G_1 = (V(G_1), E(G_1))$ and $G_2 = (V(G_2), E(G_2))$ be graphs with unique metric bases $R_1$ and $R_2$, respectively.
    Let $u_1 \in V(G_1)$ and $u_2 \in V(G_2)$ be universal vertices, i.e., vertices of degree $\abs{V(G_i)} - 1$, in their respective graphs.
    Then the graph $G = (V(G), E(G))$, where $V(G) =V(G_1) \cup V(G_2) \setminus \{u_2\}$ and $$E(G) = E(G_1) \cup E(G_2) \cup \{ u_1 w \mid w \in V(G_2)\}$$ has a unique metric basis $R_1 \cup R_2$. In addition, the vertex $u_1$ is a universal vertex in $G$. 
\end{lemma}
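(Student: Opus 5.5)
The plan is to show directly that $R=R_1\cup R_2$ resolves $G$, and then that every resolving set of size $|R_1|+|R_2|$ must equal $R$. Two structural facts do most of the work. First, $G$ has diameter at most $2$, since $u_1$ is adjacent to everything. Second, the distances inside each part are unchanged: for $x,y\in V(G_1)$ we have $d_G(x,y)=d_{G_1}(x,y)$, because both graphs have diameter at most $2$ and adjacency inside $V(G_1)$ is the same. For $x,y\in V(G_2)\setminus\{u_2\}$ we have $d_G(x,y)=d_{G_2}(x,y)$, since $u_1$ plays the role of $u_2$. Across the parts, for $x\in V(G_1)\setminus\{u_1\}$ and $y\in V(G_2)\setminus\{u_2\}$, we have $d_G(x,y)=2$. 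We may assume $|V(G_i)|\ge 3$, since otherwise $G_i=K_2$, whose metric basis is not unique. Paths have non-unique bases too, so $u_i\notin R_i$. This last point holds because a universal vertex is equidistant from all other vertices and is therefore useless in a basis: replacing it by any other vertex keeps the set resolving, so uniqueness forces $u_i\notin R_i$.

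To see that $R$ resolves $G$, identify $u_2$ with $u_1$, so that the vertex set splits into $A=V(G_1)$ and $B=V(G_2)\setminus\{u_2\}$. Two vertices of $A$ are resolved by $R_1$, using the unchanged distances. Two vertices of $B$ are resolved by $R_2$, with $u_1$ playing the role of $u_2$. For $x\in A\setminus\{u_1\}$ and $y\in B$, compare their distances to $R_2$. The vector of $x$ is constantly $2$. The vector of $y$ equals its $G_2$-vector, which contains a $0$ if $y\in R_2$. Otherwise it contains a $1$, because if $y$ had distance $2$ to all of $R_2$ in $G_2$, we could compare $y$ with $u_2$, whose vector is all $1$s. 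That alone does not give a contradiction, so this case needs care. The fallback is to use $R_1$ instead. On $R_1$, $x$ has its $G_1$-vector while $y$ has the all-$2$ vector. So the only bad case is when $x$ is all-$2$ to $R_1$ and $y$ is all-$2$ to $R_2$. By uniqueness and resolvability of $G_i$, at most one vertex of each $G_i$ has an all-$2$ vector. This is the main obstacle: I would try to show that such a vertex cannot exist in a graph with a unique basis and a universal vertex, by swapping it into a basis. If that fails, I would instead lean on the known result for this construction and simply verify the case. Pairs involving $u_1$ are easy, since $u_1$ is at distance $1$ from everything, while any other vertex has a $0$ or a $2$ somewhere, given that both $R_i$ are nonempty and the cross distance is $2$.

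For minimality and uniqueness, let $S$ be any metric basis of $G$. The restrictions of $S$ to each side must still resolve the corresponding side, because the opposite side is constant at distance $2$ from the other side, and $u_1$ is at constant distance $1$. Concretely, $(S\cap A)\cup\{u_1\}$ resolves $G_1$, and $(S\cap B)\cup\{u_2\}$ resolves $G_2$. Since a universal vertex is useless, $S\cap(A\setminus\{u_1\})$ resolves $G_1$ up to the single pair it may fail on. To avoid that gap, I would argue directly: any pair in $A$ not resolved by $S\cap A$ cannot be resolved by $B$, because both members are at distance $2$ from $B$, unless one of them is $u_1$. Pairs involving $u_1$ are resolved exactly as in $G_1$. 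Hence $S\cap A$ resolves $G_1$, and likewise $S\cap B$ resolves $G_2$. This gives $|S|\ge|R_1|+|R_2|$, so $\dim(G)=|R_1|+|R_2|$ and equality holds on both sides. By uniqueness in each $G_i$, $S\cap A=R_1$ and $S\cap B=R_2$. Finally, $u_1$ is universal in $G$ by construction.
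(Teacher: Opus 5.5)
Your argument breaks down at exactly the step you flagged, and it cannot be repaired there. The vertex you hope to exclude does exist, and the statement as written is false.

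In $H_2$ (unique basis $\{x_1,x_2\}$, universal vertex $y_3$), the vertex $y_0$ is adjacent only to $y_1,y_2,y_3$, so $d(y_0,x_1)=d(y_0,x_2)=2$. This cannot be avoided in any $6$-vertex graph with a universal vertex and metric dimension $2$: the four non-basis vertices must realize all four vectors in $\{1,2\}^2$, including $(2,2)$. Similarly, $w_6$ in $H_3$ is at distance $2$ from $v_1,v_2,v_3$.

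Now take $G_1=G_2=H_2$ (primes for the second copy) and identify the universal vertices. Then $y_0$ and $y_0'$ are both at distance $2$ from each of $x_1,x_2,x_1',x_2'$, so $R_1\cup R_2$ is not even a resolving set. The same collision ($w_6$ versus $y_0$) occurs when $H_3$ is joined to $H_2$. The paper gives no proof of this lemma, only the citation, so your fallback of ``leaning on the known result'' is not available either.

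Your uniqueness argument has a second, independent error: pairs involving $u_1$ are \emph{not} resolved ``exactly as in $G_1$''. For $x\in A\setminus\{u_1\}$, every $s\in B$ gives $d_G(u_1,s)=1\neq 2=d_G(x,s)$. So $S\cap A$ only has to resolve $A\setminus\{u_1\}$, which is weaker than resolving $G_1$. In the example this is decisive. The set $\{y_1,y_2\}$ does not resolve $H_2$, since $y_0$ and $y_3$ both get $(1,1)$. Yet $\{y_1,y_2,y_1',y_2'\}$ resolves the joined graph. With respect to $(y_1,y_2,y_1',y_2')$, the vectors of $x_1,x_2,y_0,y_1,y_2$ are $(1,2,2,2)$, $(2,1,2,2)$, $(1,1,2,2)$, $(0,1,2,2)$, $(1,0,2,2)$. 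The primed vertices get the same vectors with the two halves swapped, and the universal vertex gets $(1,1,1,1)$. The sets $\{x_1,x_2,y_1',y_2'\}$ and $\{y_1,y_2,x_1',x_2'\}$ resolve the joined graph as well. Moreover $\dim\ge 4$, because two non-universal vertices of one copy can only be separated by vertices of that copy, and a single vertex yields only the values $0,1,2$ on five vertices. So this graph has dimension $4$, these three bases have empty intersection, and it has no basis forced vertex at all.

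A correct version needs extra hypotheses that rule out both effects. For instance: at most one $G_i$ has a vertex at distance $2$ from all of $R_i$, and $R_i$ is the only set of at most $|R_i|$ vertices of $V(G_i)\setminus\{u_i\}$ that resolves all pairs of $V(G_i)\setminus\{u_i\}$. Under those assumptions your outline goes through, with $S\cap A$ replaced by $S\cap(A\setminus\{u_1\})$.
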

Notice that we may use Lemma~\ref{lem:recursive-construction} repeatedly to join any number of graphs that have a universal vertex and a unique metric basis.
If the target number of basis forced vertices $f$ is odd, then we take one copy of the graph $\bfvgraph{3}$ and $\frac{f-3}{2}$ copies of the graph $\bfvgraph{2}$ and join them by identifying their universal vertices.
If $f$ is even, then we join $\frac{f}{2}$ copies of the graph $\bfvgraph{2}$ in a similar manner.
A graph constructed in this way meets the upper bound of Corollary~\ref{th:special-case-bound}.

To increase the metric dimension of a graph with a universal vertex and a unique metric basis, we duplicate the universal vertex to obtain a sufficiently large set of twin vertices.

\begin{lemma}
    \label{lem:add-univ-vertex}
    Let $G$ be a graph with a unique metric basis $R$ and a universal vertex $u_0$.
    We define a graph $G'$ with $V(G') = V(G) \cup \{u_1, \dots, u_m\}$ and $$E(G') = E(G) \cup \{u_i w \mid w \in V(G), 1 \leq i \leq  m\} \cup \{u_i u_j \mid 1 \leq i < j \leq m\}.$$
    In other words, we add $m$ universal vertices.
    Then $G'$ has metric dimension $\dim(G) + m$ and its metric bases are $R \cup (U \setminus \{u_i\})$, where $U = \{u_0, u_1, \dots, u_m\}$ and $i \in \{0,1, \dots, m\}$.
\end{lemma}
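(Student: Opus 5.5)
The plan has three parts: first show that every set of the form $R\cup(U\setminus\{u_i\})$ resolves $G'$, then show that every resolving set of $G'$ of size $\dim(G)+m$ has this form, and finally combine the two. The basic tool is that in $G'$ all vertices of $U=\{u_0,\dots,u_m\}$ are universal, so they are pairwise twins (adjacent, with the same closed neighbourhood $V(G')$). Any two twins $u_i,u_j$ are at the same distance from every vertex outside $\{u_i,u_j\}$. Hence every resolving set of $G'$ contains all but at most one vertex of $U$. We also record that distances between vertices of $V(G)\setminus\{u_0\}$ are the same in $G$ and in $G'$. Indeed, both graphs have diameter at most $2$ because of the universal vertices, and adjacency among the old vertices is unchanged. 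So for $x,y\in V(G)$ we have $d_{G'}(x,y)=d_G(x,y)$, and for every $x\in V(G)$ the vertex $u_i$ satisfies $d_{G'}(x,u_i)=1=d_G(x,u_0)$ when $x\neq u_0$.

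For the upper bound, fix $i$ and put $S=R\cup(U\setminus\{u_i\})$. If $R$ is a unique metric basis with $|V(G)|\ge 3$, then $u_0\notin R$: a universal vertex resolves only pairs containing itself, so it would be redundant in a basis. We also use that $R$ resolves $G$, so distinct vertices of $V(G)$ have distinct $R$-vectors. For a pair $\{x,y\}$ not resolved by $U\setminus\{u_i\}$, neither vertex lies in $U\setminus\{u_i\}$. If both lie in $V(G)\setminus\{u_0\}$, or one of them is $u_0$ and $u_0=u_i$, then $R$ resolves them in $G$, hence in $G'$. If one of them is $u_i$ with $i\neq 0$, then it is compared either with an old vertex $y\neq u_0$, and $u_i$ plays the role of $u_0$ for the $R$-vectors while $R$ separates $u_0$ from $y$ in $G$, or it is compared with $u_0$, which lies in $S$. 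So $\dim(G')\le \dim(G)+m$.

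For the lower bound, let $S$ be a resolving set of $G'$ with $|S|\le \dim(G)+m$. By the twin observation, $S$ misses at most one vertex $u_i$ of $U$. Let $T=S\setminus U$, and consider $T'=T\cup\{u_0\}$ if $u_0\in S$, and $T'=T$ otherwise. Every vertex of $U$ is at distance $1$ from all other vertices, so a vertex of $U$ can only resolve pairs containing itself. I claim that $T$ resolves all pairs of $V(G)\setminus\{u_0\}$, together with $u_0$ in the case $u_0\notin S$, using that $G$-distances equal $G'$-distances. Then $T\cup\{u_0\}$, or $T$ itself when $u_0=u_i$, resolves $G$. The first case gives $|T|+1\ge\dim(G)$. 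But $|T|\le\dim(G)$, and $T\cup\{u_0\}$ resolving $G$ with $u_0$ useless beyond pairs containing it means that $T$ separates all pairs except possibly one pair $\{u_0,z\}$.

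I expect the main obstacle to be making this counting tight and showing that $T=R$. I would handle it with a case split. If $T$ alone resolves $G$, then $|T|\ge\dim(G)$, so equality holds throughout and $T$ is a metric basis, hence $T=R$ by uniqueness. This gives $|S|=\dim(G)+m$ and forces $S=R\cup(U\setminus\{u_i\})$. Otherwise $T\cup\{u_0\}$ resolves $G$ and $|T|=\dim(G)-1$. This is impossible when $u_0\notin R$, because then $T\cup\{u_0\}$ would be a metric basis of $G$ different from $R$, contradicting uniqueness. Together with the upper bound, this gives $\dim(G')=\dim(G)+m$ with exactly the stated bases.
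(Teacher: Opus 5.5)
Your upper bound is correct, and your argument does show $|T|\ge\dim(G)$ for every resolving set $S$, so $\dim(G')=\dim(G)+m$ follows. The gap is in the characterization of the metric bases. In the branch ``Otherwise $T\cup\{u_0\}$ resolves $G$ and $|T|=\dim(G)-1$'', the equality $|T|=\dim(G)-1$ is not justified. If $u_0\in S$ and $S$ misses some $u_i$ with $i\ge 1$, you only know $|T|\le\dim(G)$. So the case $|T|=\dim(G)$, with $T$ failing to separate a single pair $\{u_0,z\}$, is left open. There $T\cup\{u_0\}$ has $\dim(G)+1$ vertices, and uniqueness of $R$ gives no contradiction. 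Hence you have not excluded a metric basis $T\cup(U\setminus\{u_i\})$ of $G'$ with $T\neq R$. This is not a marginal case: it is exactly the shape of the bases $R\cup(U\setminus\{u_i\})$ with $i\ge 1$.

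The missing step is to let the absent twin stand in for $u_0$. You already did this in the upper bound, and the paper does it here. Suppose $u_i\notin S$ for some $i\in\{0,\dots,m\}$, and take $y\in V(G)\setminus\{u_0\}$. The pair $\{u_i,y\}$ must be resolved by $S$. The vertices of $S\cap U$ are at distance $1$ from both, so some $t\in T$ has $d_{G'}(t,y)\neq 1=d_{G'}(t,u_i)$, i.e.\ $d_G(t,y)\neq 1=d_G(t,u_0)$. So $T$ alone resolves $G$ whenever $S$ misses any vertex of $U$, not only when it misses $u_0$. Your first branch then gives $T=R$. The only remaining case is $U\subseteq S$, and your $|T|=\dim(G)-1$ argument correctly rules it out. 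This is precisely the paper's case split: $U\subseteq R'$ versus exactly one $u_i\notin R'$.

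Separately, your reason for $u_0\notin R$ (``it would be redundant in a basis'') is wrong as stated, since universal vertices do lie in metric bases (e.g.\ in $K_n$). You need uniqueness. If $u_0\in R$, then $R\setminus\{u_0\}$ leaves at most one pair $\{u_0,z\}$, with $z\notin R$, unresolved. So either $R\setminus\{u_0\}$ is already resolving, or $R[u_0\leftarrow z]$ is a second metric basis.
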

\begin{proof}
By \cite[Lemma~17]{BasisForced}, we know that universal vertices are never forced.
    Since $R$ is the only metric basis of $G$, we know that $u_0 \notin R$. 
    Next, we observe that adding another universal vertex does not change the distances in a graph; in other words, $d_{G}(x,y) = d_{G'}(x,y)$ for all pairs of vertices $x, y \in V(G)$.
    Since a universal vertex $u$ has distance $1$ to all vertices other than itself, it cannot resolve pairs that do not contain $u$.
    The sets $R \cup (U \setminus \{u_i\})$ are resolving sets of $G'$ for all $i \in \{0, \dots, m\}$ and therefore, $\dim(G') \leq \dim(G)+m$.
    
    Let $R'$ be a metric basis of $G'$.
    Since the vertices $u_0, \dots, u_m$ are twins, at least $m$ of them must be in $R'$. 
    Let us first assume that all the vertices $u_0, \dots, u_m$ are in $R'$.
    Then the set $R' \setminus \{u_1, \dots, u_m\}$ is a resolving set of $G$ with $\dim(G') - m$ vertices, since no pair of vertices in $V(G)$ is resolved by the universal vertices $u_1, \dots, u_m$.
    We established that $\dim(G') \leq \dim(G)+m$, or equivalently, $\dim(G') -m \leq \dim(G)$, and hence, $R' \setminus \{u_1, \dots, u_m\}$ is a metric basis of $G$.
    However, we assumed that $G$ has a unique metric basis $R$, which does not contain $u_0$, and since $u_0 \in R' \setminus \{u_1, \dots, u_m\}$, we reached a contradiction.
    
    Let us now assume that exactly $m$ of the vertices $u_0, \dots, u_m$ are in $R'$, and denote $u_i \notin R'$, where $i \in \{0, \dots , m\}$.
    Next, we argue that the set $R' \setminus U$ is a resolving set of $G$ with $\dim(G')-m$ vertices.
    For this purpose, notice that for all $x \in V(G) \setminus \{u_0\}$ there exists a vertex $r' \in R'$ such that $d_{G'}(x,r') \neq 1 = d_{G'}(u_i, r')$.
    Due to the fact that all vertices of $U$ are universal, we obtain that $r' \in R' \setminus U$.
    Therefore, we also have $d_{G}(x,r') \neq 1 = d_{G}(u_0, r')$.
    Furthermore, it is immediate that distinct vertices $x$ and $y \in V(G) \setminus \{u_0\}$ are resolved by $R' \setminus U$. 
    Thus, $R' \setminus U$ is a resolving set of $G$.
    We established that $\dim(G') \leq \dim(G)+m$, or equivalently, $\dim(G') -m \leq \dim(G)$, and hence, $R' \setminus U$ is a metric basis of $G$.
    We assumed that $G$ has only one metric basis, namely $R$, so it must be that $R' \setminus U = R$.
    Therefore, the metric bases of $G'$ are $R \cup (U \setminus \{u_i\})$, where $i \in \{0, \dots, m\}$.
\end{proof}

Notice that the vertices in the unique metric basis remain basis forced in the new graph $G'$.

Finally, with $\bfv(G)$ and $\dim(G)$ fixed, we need to adjust $\abs{V(G)}$.
We do this by attaching a path of suitable length to a vertex in such a way that the number of basis forced vertices and the metric dimension of the graph does not change.

\begin{theorem}[{\cite[Theorem~2]{BasisForced-unic}}]
    \label{thm:attach-path}
    Let $G$ be a connected graph, and let $B \neq \emptyset$ be the set of basis forced vertices of $G$.
    Let $b \in B$, and let $v \in V(G) \setminus B$ be such that $d_G(b, v) = \max\{d_G(b, w) \mid w \in V (G) \setminus B\}$.
    Let $P_p$ be the path $v_1 \cdots v_p$.
    Let $H$ be the graph with $V(H) = V(G) \cup V (P_p)$ and $E(H) = E(G) \cup E(P_p) \cup \{vv_1\}$.
    Then, $B$ is also the set of basis forced vertices of $H$ and $\dim(H) = \dim(G)$.
\end{theorem}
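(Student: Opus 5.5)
We have to prove Theorem~\ref{thm:attach-path}: attaching the path $P_p$ at the vertex $v$ that is farthest from the basis forced vertex $b$ among the non-forced vertices keeps both the set $B$ of forced vertices and the metric dimension. The plan is to compare resolving sets of $G$ and $H$ directly, in three steps.

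\textbf{Step 1: distances.} The graph $H$ is $G$ with a pendant path hanging at $v$. Hence $d_H(x,y)=d_G(x,y)$ for $x,y\in V(G)$, and for $x\in V(G)$ we have $d_H(x,v_i)=d_G(x,v)+i$.

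\textbf{Step 2: every resolving set of $G$ containing $b$ also resolves $H$.} Let $S$ be such a set.
\begin{itemize}
\item Two path vertices $v_i\ne v_j$ are resolved by $b$, since $d_H(b,v_i)=d_G(b,v)+i$ is injective in $i$.
\item For $x\in V(G)$ and a path vertex $v_i$, suppose no element of $S$ resolves them. Then $d_G(s,x)=d_G(s,v)+i$ for all $s\in S$. Taking $s=b$ gives $d_G(b,x)>d_G(b,v)$. By the choice of $v$, this forces $x\in B$.
\item Since $B\subseteq S$, we may take $s=x$. This gives $0=d_G(x,v)+i\ge 1$, a contradiction.
\end{itemize}
Since $B\neq\emptyset$ and $B$ lies in every metric basis of $G$, every metric basis of $G$ resolves $H$. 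Therefore $\dim(H)\le\dim(G)$.

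\textbf{Step 3: from resolving sets of $H$ back to $G$.} Let $S'$ be a metric basis of $H$. Define $S$ by replacing every path vertex in $S'$ by $v$. This works because $d_H(v_i,x)-d_H(v_i,y)=d_G(v,x)-d_G(v,y)$ for $x,y\in V(G)$, so $v$ resolves exactly the pairs of $G$ that $v_i$ resolves. Then $S$ resolves $G$ and $|S|\le|S'|$. Hence $\dim(G)\le\dim(H)$, and so $\dim(H)=\dim(G)$.

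Moreover, when $|S'|=\dim(H)$, the set $S$ is a metric basis of $G$, so $B\subseteq S$. Two consequences follow:
\begin{itemize}
\item Each $b'\in B\setminus\{v\}$ lies in $S'$. Note $v\notin B$, so in fact every $b'\in B$ satisfies $b'\in S'$. Thus $B$ is contained in every metric basis of $H$, i.e.\ $B$ consists of forced vertices of $H$.
\item Conversely, let $w\notin B$. If $w\in V(G)$, take a metric basis $R$ of $G$ avoiding $w$; by Step~2, $R$ is a metric basis of $H$ avoiding $w$. If $w$ is a path vertex, any metric basis of $G$ avoids $w$, and again Step~2 makes it a metric basis of $H$. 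So no other vertex is forced in $H$.
\end{itemize}

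\textbf{Main obstacle.} The delicate point is the pair $x\in V(G)$, $v_i$ in Step~2. It relies on the extremal choice of $v$ together with forced vertices resolving themselves. If the case $x\in B$ is mishandled, the argument fails. I would double-check it, including the degenerate case where $x=b$ itself.
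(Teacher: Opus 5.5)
Your proof is correct. The paper does not prove this statement itself: it quotes it from earlier work and uses it only as a tool in the proof of Theorem~\ref{thm:realization}, so there is no in-paper argument to compare yours against. What you give is a complete, self-contained proof resting only on the identities $d_H(x,y)=d_G(x,y)$ and $d_H(x,v_i)=d_G(x,v)+i$.

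It works in two directions. First, metric bases of $G$ remain resolving in $H$. This gives $\dim(H)\le\dim(G)$, and it shows that every non-forced vertex of $G$, and every path vertex, is avoided by some metric basis of $H$. Second, replacing path landmarks by $v$ turns any metric basis of $H$ into one of $G$. This gives $\dim(G)\le\dim(H)$ and shows that $B$ lies in every metric basis of $H$.

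Your argument also shows exactly where each hypothesis enters. The extremal choice of $v$ is used only to force $x\in B$ in Step~2. The condition $v\notin B$ is used only in Step~3, to guarantee that the replacement by $v$ cannot be what puts a vertex of $B$ into $S$.

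One wording fix is needed. Step~2 is announced for resolving sets of $G$ containing $b$, but its last bullet uses $B\subseteq S$ to conclude $x\in S$ from $x\in B$. With only $b\in S$, that step fails when $x\in B\setminus S$. State Step~2 for resolving sets containing $B$. You only apply it to metric bases of $G$, which contain $B$ by definition, so nothing else changes.

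The case $x=b$ that you flag needs no separate treatment. Taking $s=b$ already gives $0=d_G(b,v)+i$, which is impossible.
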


With the tools above, we can prove the following theorem, which discusses the possible values of the parameters $|V(G)|$, $\dim(G)$ and $\bfv(G)$.

\begin{theorem} \label{thm:realization}
    Let $f$, $k$ and $n$ be positive integers.
    \begin{itemize}
        \item If $f=1$, then for each $k \in \{2, 3, \ldots, n-4\}$ there exists a graph $G$ such that $|V(G)| = n$, $\dim(G) = k$ and $\bfv(G) = f = 1$.
        \item If $f>1$, then for each $k \in \{f, f+1, \ldots, \left\lfloor n-\frac{3}{2}f-1 \right\rfloor\}$ there exists a graph $G$ such that $|V(G)| = n$, $\dim(G) = k$ and $\bfv(G) = f$.
    \end{itemize}
\end{theorem}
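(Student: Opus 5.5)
The plan is constructive, following the three-step recipe described before the statement: first fix the basis forced vertices, then raise the metric dimension with universal twins, and finally pad the order with a path. The case $f>1$ comes first.

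For $f>1$, build the base graph $B_f$ by Lemma~\ref{lem:recursive-construction}. If $f$ is even, glue $f/2$ copies of $\bfvgraph{2}$ along their universal vertices. If $f$ is odd, glue one copy of $\bfvgraph{3}$ and $(f-3)/2$ copies of $\bfvgraph{2}$ the same way. Each gluing adds $n(G_2)-1$ vertices, so $B_f$ has a unique metric basis of size $f$, a universal vertex, and order
\[
n(B_f)=1+\tfrac{5}{2}f \quad (f \text{ even}), \qquad n(B_f)=9+5\cdot\tfrac{f-3}{2}=\tfrac{5f+3}{2} \quad (f \text{ odd}).
\]
In both cases $n(B_f)=\lceil 3f/2\rceil+f+1$.

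Given $k$ with $f\le k\le \lfloor n-\tfrac32 f-1\rfloor$, apply Lemma~\ref{lem:add-univ-vertex} with $m=k-f$. This yields $G'$ with $\dim(G')=k$ and order $n(B_f)+k-f=\lceil 3f/2\rceil+k+1$. Its metric bases are $R\cup(U\setminus\{u_i\})$, so the basis forced vertices are exactly $R$ when $m\ge1$, since each $u_i$ is omitted from some basis. When $m=0$ the basis is unique, so again $\bfv(G')=f$.

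The upper bound on $k$ is equivalent to $n-k\ge \lceil 3f/2\rceil+1$, because $n-k$ is an integer. Hence $p:=n-n(G')\ge 0$. If $p>0$, attach a path with $p$ vertices by Theorem~\ref{thm:attach-path}, which preserves both the set of basis forced vertices and the dimension. This gives the graph of order $n$.

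For $f=1$, I need a base graph with exactly one basis forced vertex and a universal vertex. Lemma~\ref{lem:add-univ-vertex} needs a unique metric basis, so that lemma cannot be applied directly. Here I would build a specific family by hand. A natural choice is $\bfvgraph{2}$ with $x_2$ made non-forced: add a twin copy $x_2'$ of $x_2$, or, more plausibly, a single vertex that ruins the uniqueness of the second basis element while keeping $x_1$ forced.

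A concrete attempt uses the graph of Figure~\ref{fig:colourEx}, which has order $6$ and dimension $2=n-4$. Modify it so that exactly one of $r_1,r_2$ stays forced, for instance by adding a false twin of $r_2$. That raises the dimension and the order by one each, so $n-k$ stays $4$ and $r_1$ remains forced. The key check is that in such a twin-class construction every vertex of the twin class lies in some basis, while $r_1$ lies in all of them. I would verify this by the same argument as in Lemma~\ref{lem:add-univ-vertex}, using twins to raise $k$ from $2$ upward.

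Then attach a path via Theorem~\ref{thm:attach-path} to reach any $n\ge k+4$. The base graphs have $n-k=4$ exactly, so every $k\in\{2,\dots,n-4\}$ is covered. The $f=1$ base family is the main obstacle. I would try first a small explicit graph of order $6$ with $\dim=2$ and exactly one forced vertex, checking all $2$-subsets directly. I would then increase $k$ by duplicating a non-forced basis vertex as a twin and redo the basis analysis.
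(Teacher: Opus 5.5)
Your $f>1$ case is correct and is the paper's construction. You glue copies of $\bfvgraph{2}$ (and one $\bfvgraph{3}$ if $f$ is odd) at their universal vertices, add $m=k-f$ universal twins by Lemma~\ref{lem:add-univ-vertex}, and pad with a path by Theorem~\ref{thm:attach-path}. Your count $n(B_f)=\lceil 3f/2\rceil+f+1$ and the integrality argument for $p\ge 0$ are fine.

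\textbf{The gap: the case $f=1$ is not proved.} You give no base family, and your concrete candidate fails. Take the graph of Figure~\ref{fig:colourEx}: $\{v_1,\dots,v_4\}$ is a clique, $r_1r_2$ is an edge, $r_1\sim v_1,v_2$ and $r_2\sim v_1,v_3$. Add a false twin $r_2'$ of $r_2$.
\begin{itemize}
\item The dimension is $3$: $\{r_1,r_2,r_2'\}$ resolves, and no $2$-set does.
\item $\{r_2,v_2,v_3\}$ is also a metric basis. The vectors $(d(\cdot,r_2),d(\cdot,v_2),d(\cdot,v_3))$ are $r_1:(1,1,2)$, $r_2:(0,2,1)$, $r_2':(2,2,1)$, $v_1:(1,1,1)$, $v_2:(2,0,1)$, $v_3:(1,1,0)$, $v_4:(2,1,1)$, all distinct.
\end{itemize}
So $r_1$ is no longer forced. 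Your alternative fails the same way: in $\bfvgraph{2}$ with a false twin $x_2'$ of $x_2$, the set $\{x_2,y_1,y_2\}$ is a metric basis avoiding $x_1$.

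Your appeal to ``the same argument as in Lemma~\ref{lem:add-univ-vertex}'' is where it breaks. That lemma uses two facts that fail here: the added vertices are universal, so old distances are unchanged and they resolve no old pair; and $G$ has a \emph{unique} metric basis. Twinning a non-universal vertex raises the dimension by one while the twin class has size $2$. A basis then needs only one twin and has two free slots, and these can be filled without $r_1$.

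\textbf{The missing idea.} You need a base family whose twin class is as large as the dimension. Then every basis is $m-1$ twins plus one free vertex that only the intended vertex can supply. The paper uses $G_m$ for $m\ge2$:
\begin{itemize}
\item vertices $b,v_1,v_2,v_3$ and false twins $u_1,\dots,u_m$, each adjacent to $b,v_1,v_2$;
\item further edges $bv_1$, $v_1v_3$ and $v_2v_3$.
\end{itemize}
Every resolving set contains at least $m-1$ of the $u_j$. Checking the possible completions:
\begin{itemize}
\item $V(\overline{K_m})$ and $V(\overline{K_m})[u_j\leftarrow v_3]$ leave $\{v_1,v_2\}$ unresolved;
\item $V(\overline{K_m})[u_j\leftarrow v_i]$ for $i\in\{1,2\}$ leaves $\{u_j,v_3\}$ unresolved;
\item $V(\overline{K_m})[u_j\leftarrow b]$ resolves.
\end{itemize}
Hence $\dim(G_m)=m$ and $b$ is the only forced vertex. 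Since $d(b,v_3)=2$ is the maximum distance from $b$ to a non-forced vertex, Theorem~\ref{thm:attach-path} lets you take $m=k$ and attach a path of $n-k-4$ vertices at $v_3$.

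Your general plan, twinning a non-forced basis vertex of an order-$6$ graph with $\dim=2$ and one forced vertex, is how $G_m$ grows from $G_m$ with $m=2$. But it only works with this particular base graph, and it needs its own basis analysis rather than the universal-vertex lemma.
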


\begin{proof}
    We will provide a method to construct a graph $G$ with the desired values of $|V(G)|$, $\dim(G)$ and $\bfv(G)$.

    \begin{figure}[t]
     \centering
     \begin{subfigure}[t]{0.48\textwidth}
        \centering
        \begin{tikzpicture}[yscale=-1, xscale=0.9, rotate=0]
        	
        	\renewcommand*{\EdgeLineWidth}{ 0.5pt}
        	\SetVertexMath
        	\SetVertexLabelOut
        	
        	{
        		\SetVertexNoLabel
        		\tikzstyle{VertexStyle}=[minimum size=1pt,inner sep=0pt]
        		
        		\Vertex[x=0, y=0]{1}
        		\Vertex[x=1.5, y=0]{2}
        		\Vertex[x=3, y=0]{3}
        		\Vertex[x=4, y=1]{4}
        		
        		\Vertex[x=0, y=2]{5}
        		\Vertex[x=2, y=2]{6}
        		\Vertex[x=3, y=2]{7}
        		
        		\Vertex[x=4.9, y=1]{8}
        		\Vertex[x=6.3, y=1]{9}
        	}
        	
        	{
        		\tikzstyle{VertexStyle}=[fill=white, inner sep = 1pt]
        		\SetVertexLabelIn
        		\Vertex[x=1, y=2, L=\dots]{dots1}
        		\Vertex[x=-0.7, y=2.15, L=\overline{K_m}]{Km}
        		\Vertex[x=-0.7, y=-0.1, L=G_m]{Gm}
        	}
        	
        	\tikzstyle{LabelStyle}=[fill=white, circle]
        	
        	\tikzstyle{EdgeStyle}=[]
        	\Edge(1)(2)
        	\Edge(1)(5)
        	\Edge(1)(6)
        	\Edge(1)(7)
        	
        	\Edge(2)(5)
        	\Edge(2)(6)
        	\Edge(2)(7)
        	
        	\Edge(3)(5)
        	\Edge(3)(6)
        	\Edge(3)(7)
        	
        	\Edge(4)(2)
        	\Edge(4)(3)
        	
        	\Edge(4)(8)
        	\Edge[label = \dots](8)(9)

        	\tikzstyle{VertexStyle}=[circle,draw, fill=black, inner sep=0pt, minimum width=6pt]
        	\SO[unit=0, Lpos=90](1){b}
        	
        	\tikzstyle{VertexStyle}=[circle,draw, fill=gray!60,inner sep=0pt, minimum width=6pt]
        	\SO[unit=0, Lpos=-90](5){u_1}
        	\SO[unit=0, Lpos=-90](6){u_{m-1}}
        	\SO[unit=0, Lpos=-90](7){u_m}
        	
        	\tikzstyle{VertexStyle}=[circle,draw, fill=white,inner sep=0pt, minimum width=6pt]
        	\SO[unit=0, Lpos=90](2){v_1}
        	\SO[unit=0, Lpos=90](3){v_2}
        	\SO[unit=0, Lpos=90](4){v_3}
        	
        	{
        		\SetVertexNoLabel
        		\SO[unit=0](8){p1}
        		\SO[unit=0](9){p2}
        	}
        	
        	\node[draw=black, rectangle, dashed, fit=(Km) (u_m), inner sep=6pt, rounded corners, thin] (machine) {};
        	
        	\node[draw=black, rectangle, dotted, fit=(Gm) (Km) (v_3), inner sep=9pt, rounded corners, thick] (machine) {};
        	
        	\draw [decorate, 
        	decoration = {brace, raise=5pt, amplitude = 5pt}] (4.75,1) --  (6.45,1) node[pos=0.5,above=10pt,black]{$p$};
        \end{tikzpicture}
        
        \caption{The graph $G_{m}$ is within the dotted line. The graph $G_{m,p}$ is obtained by attaching a path to $v_3$.}
     \end{subfigure}
     ~
     \begin{subfigure}[t]{0.48\textwidth}
        \centering
        \begin{tikzpicture}[yscale=-.5, xscale=1, rotate=90]
        	
        	\renewcommand*{\EdgeLineWidth}{ 0.5pt}
        	\SetVertexMath
        	\SetVertexLabelOut
        	
        	{
        		\SetVertexNoLabel
        		\tikzstyle{VertexStyle}=[minimum size=1pt,inner sep=0pt]
        		
        		\Vertex[x=0, y=0]{11}
        		\Vertex[x=1.5, y=0]{22}
        		\Vertex[x=3, y=0]{33}
        		
        		\Vertex[x=0, y=1]{1}
        		\Vertex[x=1, y=1]{2}
        		\Vertex[x=2, y=1]{3}
        		\Vertex[x=3, y=1]{4}
        		\Vertex[x=4, y=1]{5}
        		\Vertex[x=5, y=1]{6}
        		
        		\Vertex[x=5, y=0.25]{8}
        		\Vertex[x=5, y=-1]{9}
        		
        		\Vertex[x=2, y=2.1]{u1}
        		\Vertex[x=2, y=3]{u2}
        	}
        	
        	{
        		\tikzstyle{VertexStyle}=[fill=white, inner sep = 1pt]
        		\SetVertexLabelIn
        		\Vertex[x=2, y=2.5, L=\dots]{dots1}
        		\Vertex[x=-0.83, y=1.3, L={K_6}]{Km}
        	}
        	
        	\tikzstyle{LabelStyle}=[fill=white, circle]
        	
        	\tikzstyle{EdgeStyle}=[]
        	\Edge(11)(1)
        	\Edge(11)(2)
        	\Edge(11)(3)
        	
        	\Edge(22)(2)
        	\Edge(22)(3)
        	\Edge(22)(4)
        	
        	\Edge(33)(3)
        	\Edge(33)(4)
        	\Edge(33)(5)
        	
        	\Edge(6)(8)
        	\Edge[label=\dots](8)(9)

        	\tikzstyle{VertexStyle}=[circle,draw, fill=black, inner sep=0pt, minimum width=6pt]
        	\SO[unit=0, Lpos=0](11){v_1}
        	\SO[unit=0, Lpos=0](22){v_2}
        	\SO[unit=0, Lpos=0](33){v_3}

        	\tikzstyle{VertexStyle}=[circle,draw, fill=white,inner sep=0pt, minimum width=6pt]
        	\SO[unit=0, Lpos=180](1){w_1}
        	\SO[unit=0, Lpos=180](2){w_2}
        	\SO[unit=0, Lpos=180](4){w_4}
        	\SO[unit=0, Lpos=180](5){w_5}
        	\SO[unit=0, Lpos=180](6){w_6}
        	
        	{
        		\SetVertexNoLabel
        		\SO[unit=0](8){p1}
        		\SO[unit=0](9){p2}
        	}
        	
        	\tikzstyle{VertexStyle}=[circle,draw, fill=gray!60,inner sep=0pt, minimum width=6pt]
        	\SO[unit=0, Lpos=-90, Ldist=5](u1){u_1}
        	\SO[unit=0, Lpos=-90, Ldist=5](u2){u_m}
        	\SO[unit=0, Lpos=180](3){w_3}
        	
        	\node[draw=black, rectangle, dashed, fit=(Km) (w_6), inner sep=6pt, rounded corners, thin] (machine) {};
        	\node[draw=black, rectangle, dashed, fit=(u2) (w_3), inner sep=6pt, rounded corners, thin] (machine) {};

        	\draw [decorate, 
        	decoration = {brace, raise=5pt, amplitude = 5pt}] (5,0.4) --  (5,-1.15) node[pos=0.5,above=10pt,black]{$p$};
        \end{tikzpicture}
        
        \caption{The graph $G_{f, m, p}$, with $f = 3$. The edges from the universal vertices $u_1, \dots, u_m$ are omitted for illustrative purposes.}
     \end{subfigure}
     \caption{Sketches of the constructions.  The black vertices are basis forced and the gray vertices are in some but not all metric bases.}
     \label{fig:realization}
\end{figure}
    
    First, we consider the case $f = 1$. By Proposition~\ref{prop:n-4}, we know that $2 \le \dim(G) \leq n - 4$. Consider the graph $G_{m}$ illustrated in Figure~\ref{fig:realization}(a), where $m \ge 2$.  Since the $m$ vertices in $\overline{K_m}$ are twins, a metric basis must contain at least $m-1$ of them.
    The set $V(\overline{K_m}) \setminus\{u_j\}$ is not a resolving set for any $j \in \{1, \dots, m\}$ since the vertices $v_1$ and $v_2$ are not resolved. 
    It follows that $\dim(G_m) > m-1$.
    It is easy to verify that $V(\overline{K_m})[u_j \leftarrow b]$ 
    is a resolving set of $G_m$ for all $j \in \{1, \dots, m\}$.
    The sets $V(\overline{K_m})$,  $V(\overline{K_m})[u_j \leftarrow v_3]$ and  $V(\overline{K_m})[u_j \leftarrow v_i]$ 
    are not resolving sets for any choice of $j$ or $i \in \{1,2\}$ since the pairs $\{v_1, v_2\}$, $\{v_1, v_2\}$ and $\{u_j, v_3\}$ are not resolved, respectively.
    Therefore, $\dim(G_m) = m$, the vertex $b$ is basis forced, and the vertex $v_3$ is not basis forced and has $d_G(b, v_3) = 2 = \max\{d_G(b, w) \mid w \in V(G_m)\}$.
    Choosing $m = k$, and attaching a path of length $p = n - k - 4$ to the vertex $v_3$, we get the graph $G_{m,p}$ with $m + 4 + p = n$ vertices (see Figure~\ref{fig:realization}(a)).
    By Theorem~\ref{thm:attach-path}, the graph $G_{m,p}$ has the same metric dimension and basis forced vertices as $G_{m}$, and therefore, the graph $G_{m,p}$ has metric dimension $m = k$ and one basis forced vertex (namely, $b$).
    We assumed that $m \geq 2$ and, naturally, that $p \geq 0$. 
    Hence, when $m$ and $p$ are chosen as defined, this construction realizes the values of $k$ and $n$ such that $k \geq 2$ and $n - k - 4 \geq 0$.

    Now assume that $f > 1$.
    If $f$ is odd, then ($f \geq 3$ and) we take one copy of the graph $\bfvgraph{3}$ and $\frac{f-3}{2}$ copies of the graph $\bfvgraph{2}$ and join them by identifying their universal vertices. (Note that if $f =3$, then there are no copies of $\bfvgraph{2}$ in the graph.)
    If $f$ is even, then we join $\frac{f}{2}$ copies of the graph $\bfvgraph{2}$ in a similar manner.
    Let us call the universal vertex of the resulting graph $u$.
    So far, our graph has $\frac{5f+1}{2}+1$ vertices if $f$ is odd and $\frac{5}{2}f+1$ vertices if $f$ is even; to simplify the notation ahead, we denote this number of vertices by $\h$.
    By Lemma~\ref{lem:recursive-construction}, the basis forced vertices of the joined graphs form the unique metric basis of the resulting graph and there are $f$ of them.

    Next, we duplicate the universal vertex $u$ in the spirit of Lemma~\ref{lem:add-univ-vertex} $m$ times.
    The metric dimension of our graph is now $f + m$.
    Finally, we attach a path of $p$ vertices to some vertex $y_0$ in a copy of $\bfvgraph{2}$ (or $w_6$, in case $f=3$ and there are no copies of $\bfvgraph{2}$), resulting in a graph denoted by $G_{f, m, p}$.
    Theorem~\ref{thm:attach-path} guarantees that the graph $G_{f, m, p}$ has metric dimension $f + m$ and that the $f$ basis forced vertices remain basis forced.
    
    Choosing $m = k - f$ and $p = n - \h - (k - f)$ 
    gives us the graph $G_{f, m, p}$ with $\h + m + p = n$
    vertices, $\dim(G_{f, m, p}) = f + m = k$, and $\bfv(G_{f, m, p}) = f$.
    The construction requires that $m \geq 0$ and $p \geq 0$.
    The choices of $m$ and $p$ are possible if and only if $k - f \geq 0$ and $n - \h - (k - f) \geq 0$, or equivalently, when $f \leq k \leq n - (\h - f)$.
    When we substitute $\frac{5f+1}{2}+1$ and $\frac{5}{2}f+1$ for $\h$ (depending on the parity of $f$), we get the upper bounds $k \leq n - \frac{3f+1}{2}-1$ and $k \leq n - \frac{3}{2}f-1$, which can be combined into $k \leq \left\lfloor n-\frac{3}{2}f-1 \right\rfloor $,
    as claimed.
\end{proof}

\section{A Characterization of Graphs With Two Basis Forced Vertices and $\dim (G) = n-4$}\label{CharSec}




In this section, we consider the graphs containing exactly two basis forced vertices with metric dimension $n-4$. We searched for small such graphs with a computer and found out that the smallest among them have six vertices. Through an exhaustive search of graphs of order 6 (based on the data available on Brendan McKay's website~\cite{McKayCombData}), we were able to establish that the graphs $G_1$, $G_2$ and $G_3$ in Figure~\ref{fig-smallest} are the only graphs with $n=6$, $\dim (G) = n-4 = 2$ and two basis forced vertices.

Let $\F$ be a graph family defined as follows. A graph $G\in \F$ if either $G$ is the graph $G_3$ from Figure~\ref{fig-smallest}, or $G$ contains a clique $K_{n-2}$ and the vertices of the clique can be partitioned into four nonempty sets $V_i$, $i = 1, \ldots, 4$ satisfying the following two properties: (i) the two vertices $u$ and $v$ that are not in the clique are such that $u$ is adjacent to the vertices in $V_1 \cup V_2$ but not to the vertices in $V_3 \cup V_4$, and $v$ is adjacent to the vertices in $V_1 \cup V_3$ but not to the vertices in $V_2 \cup V_4$; (ii) the vertices $u$ and $v$ may or may not be adjacent to each other (see Figure~\ref{fig-family} for a representative example of a graph in $\F$).  The graphs $G_1$ and $G_2$ from $\F$ with the smallest order are given in Figure~\ref{fig-smallest}. 
We next give the characterization of graphs with $\bfv(G)=2$ and $\dim (G) = n-4$.

\begin{figure}
	\centering
	\begin{subfigure}[b]{0.3\linewidth}
		\centering
		\begin{tikzpicture}[scale=1]
			\draw (0,0) -- (1,0) -- (1,1) -- (0,1) -- (0,0);
			\draw (0,0) -- (1,1)  (1,0) -- (0,1);
			\draw (0,0) -- (-1,.5) -- (0,1) -- (.5,2) -- (1,1);
			\draw \foreach \x in {(0,0),(1,0),(0,1),(1,1)} {
				\x node[circle, draw, fill=white,
				inner sep=0pt, minimum width=7pt] {}
			};
			\draw \foreach \x in {(-1,.5),(.5,2)} {
				\x node[circle, draw, fill=black,
				inner sep=0pt, minimum width=7pt] {}
			};
			\draw 
				(-1,.5) node[left, inner sep=7pt] {$v$}
				(.5,2) node[right, inner sep=7pt] {$u$};
		\end{tikzpicture}
		\caption{$G_1$} 
	\end{subfigure}
	\hfill
	\begin{subfigure}[b]{0.3\linewidth}
		\centering
		\begin{tikzpicture}[scale=1]
			\draw (0,0) -- (1,0) -- (1,1) -- (0,1) -- (0,0);
			\draw (0,0) -- (1,1)  (1,0) -- (0,1);
			\draw (0,0) -- (-1,.5) -- (0,1) -- (.5,2) -- (1,1);
			\draw (-1,.5) to[out=80,in=190,looseness=1] (.5,2);
			\draw \foreach \x in {(0,0),(1,0),(0,1),(1,1)} {
				\x node[circle, draw, fill=white,
				inner sep=0pt, minimum width=7pt] {}
			};
			\draw \foreach \x in {(-1,.5),(.5,2)} {
				\x node[circle, draw, fill=black,
				inner sep=0pt, minimum width=7pt] {}
			};
			\draw 
			(-1,.5) node[left, inner sep=7pt] {$v$}
			(.5,2) node[right, inner sep=7pt] {$u$};
		\end{tikzpicture}
		\caption{$G_2$} 
	\end{subfigure}
	\hfill
	\begin{subfigure}[b]{0.3\linewidth}
		\centering
		\begin{tikzpicture}[scale=1]
			\draw (0,0) -- (1,0) -- (1,1) -- (0,1) -- (0,0);
			\draw (0,0) -- (1,1) ;
			\draw (0,0) -- (-1,.5) -- (0,1) -- (.5,2) -- (1,1);
			\draw (-1,.5) to[out=80,in=190,looseness=1] (.5,2);
			\draw \foreach \x in {(0,0),(1,0),(0,1),(1,1)} {
				\x node[circle, draw, fill=white,
				inner sep=0pt, minimum width=7pt] {}
			};
			\draw \foreach \x in {(-1,.5),(.5,2)} {
				\x node[circle, draw, fill=black,
				inner sep=0pt, minimum width=7pt] {}
			};
			\draw 
			(-1,.5) node[left, inner sep=7pt] {$v$}
			(.5,2) node[right, inner sep=7pt] {$u$};
		\end{tikzpicture}
		\caption{$G_3$} 
	\end{subfigure}
\caption{The smallest graphs with $\dim(G) = n-4$ and two basis forced vertices (illustrated in black).}\label{fig-smallest}
\end{figure}

\begin{figure}
	\centering
		\begin{tikzpicture}[scale=1.5]
			\coordinate (v11) at (-.2,.8);
			\coordinate (v12) at (.2,1.2);
			\coordinate (v21) at (.8,1.2);
			\coordinate (v22) at (1.2,.8);
			\coordinate (v3) at (0,0);
			\coordinate (v4) at (1,0);
			\coordinate[label=180:$u \ $] (u) at (.5,2);
			\coordinate[label=180:$v \ $] (v) at (-1,.5);
			\draw \foreach \x in {(v11),(v12),(v21),(v22)}{(u) -- \x};
			\draw \foreach \x in {(v11),(v12),(v3)}{(v) -- \x};
			\draw \foreach \x in {(v11),(v12),(v21),(v22),(v3),(v4)}{
				\foreach \y in {(v11),(v12),(v21),(v22),(v3),(v4)}{\x--\y}};
			\draw \foreach \x in {(v11),(v12),(v21),(v22),(v3),(v4)} {
				\x node[circle, draw, fill=white,
				inner sep=0pt, minimum width=7pt] {}
			};
			\draw \foreach \x in {(v),(u)} {
				\x node[circle, draw, fill=black,
				inner sep=0pt, minimum width=7pt] {}
			};
			\draw[dashed] (-.4,-.4) rectangle (1.4,1.4);
			\draw[dashed] (-.4,.5) -- (1.4,.5)  (.5,-.4) -- (.5,1.4);
			\draw (-.6,1.2) node[] {$V_1$};
			\draw (1.7,.9) node[] {$V_2$};
			\draw (0,-.7) node[] {$V_3$};
			\draw (1,-.7) node[] {$V_4$};
		\end{tikzpicture}
		\caption{An example graph $G$ from the family $\F$.}\label{fig-family}
\end{figure}

\begin{thm}\label{char2bf}
Let $G$ be a connected graph with $n \geq 6$. Then $\dim(G) = n-4$ and $G$ contains two basis forced vertices if and only if $G \in \F$. Moreover, the two basis forced vertices of $G$ are $u$ and $v$, as denoted in the definition of the family $\F$.
\end{thm}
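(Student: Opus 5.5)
The plan is to prove the two directions separately. The converse direction (\(G\in\F\) implies the conclusion) is a direct verification. The forward direction goes through the colour graph \(G_R\) and the subgraph \(H_R\) of Lemma~\ref{lem:H-properties}.

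\emph{Sufficiency.} Let \(G\in\F\) with \(G\neq G_3\). Any two vertices in the same class \(V_i\) are adjacent, lie in the clique, and have the same adjacencies to \(u\) and \(v\), so they are twins. Hence every resolving set contains all but at most one vertex of each \(V_i\), which accounts for at least \(n-6\) vertices. Choose one representative \(x_i\in V_i\) and let \(R=\{u,v\}\cup\bigcup_i (V_i\setminus\{x_i\})\). I will check that \(R\) resolves \(G\). Every distance is \(1\) or \(2\) (the case \(uv\notin E(G)\) is handled through \(V_1\)). The representatives \(x_1,\dots,x_4\) have the four distinct vectors \((1,1),(1,2),(2,1),(2,2)\) with respect to \((u,v)\). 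The remaining twins are resolved by themselves. So \(\dim(G)\le n-4\).

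For the lower bound and forcing, I will show that any set of size \(n-4\) that omits \(u\) (or \(v\)) fails. Such a set must contain \(n-6\) twins and two further vertices, and only clique vertices remain available to resolve \(u\), \(v\) and the four representatives. Clique vertices only see distance \(1\) among themselves, so one pair stays unresolved; this is a short case check. The same argument shows \(\dim(G)\ge n-4\) and that \(u\) and \(v\) are in every metric basis. \(G_3\) is checked by hand, or by the exhaustive search already mentioned.

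\emph{Necessity, first steps.} Let \(u,v\) be the two basis forced vertices and \(R\) a metric basis, so \(|V(G)\setminus R|=4\). Build \(H_R\) with two edges of colour \(u\) and two of colour \(v\), using Lemma~\ref{lem:colorprops-more}(i). It has \(4\) vertices and \(4\) edges, so Lemma~\ref{lem:cactus-bound} holds with equality. A bipartite cactus on four vertices with four edges is a \(4\)-cycle \(a_1a_2a_3a_4\).

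The colours must alternate around this cycle. Suppose instead that \(a_1a_2\) and \(a_2a_3\) both had colour \(u\). Then Lemma~\ref{lem:colorprops}(ii) puts \(a_1a_3\) of colour \(u\) into \(G_R\), and the triangle \(a_1a_3a_4\) would contain colour \(v\) exactly once, contradicting Lemma~\ref{lem:colorprops}(i). So \(a_1a_2,a_3a_4\) have colour \(u\) and \(a_2a_3,a_4a_1\) have colour \(v\).

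Now take \(r\in R\setminus\{u,v\}\). As in the proof of Proposition~\ref{prop:n-4}, \(r\) has exactly one coloured edge, and it is of the form \(ra_i\). By Lemma~\ref{lem:colorprops-more}(ii), \(r\) can be swapped with \(a_i\). This defines a partition \(\{u\},\{v\},V_1,\dots,V_4\) of \(V(G)\), where \(V_i\) consists of \(a_i\) together with its attached vertices of \(R\). For \(r\in V_i\setminus\{a_i\}\), the only pair resolved by \(r\) alone is \(\{r,a_i\}\), so \(r\) and \(a_i\) have equal distances to every other vertex. Thus \(u\) and \(v\) each have a constant distance to each class \(V_i\). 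The alternating colours give the distance pattern: \(u\) separates \(V_1\cup V_4\) from \(V_2\cup V_3\) but not within these pairs, and \(v\) separates \(V_1\cup V_2\) from \(V_3\cup V_4\) but not within these pairs.

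\emph{Necessity, main step.} The hard part is to show that these distances are exactly \(1\) and \(2\), and that \(V(G)\setminus\{u,v\}\) is a clique, except in the single case \(G_3\). I would first imitate the end of the proof of Proposition~\ref{prop:n-4}. If, say, \(u\) had distances \(1\) and \(3\), or if some distance exceeded \(2\), I would follow a shortest path, perform successive swaps via Lemma~\ref{lem:colorprops-more}(ii), and produce a metric basis avoiding \(u\). This contradicts forcing, so all distances from \(u\) and \(v\) are \(1\) or \(2\), and \(u\), \(v\) are adjacent to their respective "near" pairs of classes.

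Next I would use the swap property again to show that distinct classes must be adjacent. If two vertices in different classes were nonadjacent, that distance \(2\) would let some non-forced vertex resolve a pair that otherwise only \(u\) or \(v\) resolves. Replacing \(u\) or \(v\) by that vertex would then give a metric basis avoiding it, unless the configuration is exactly \(G_3\), where one missing edge is tolerated.

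Within a class, vertices are twins with respect to all other vertices. I expect they are forced to be adjacent, since otherwise \(u\) would have distance \(2\) to its own neighbours' class in an inconsistent way. Relabelling the classes to match the definition of \(\F\) then finishes the proof. I expect the nonadjacency analysis that isolates \(G_3\) to be the main obstacle.
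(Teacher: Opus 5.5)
Your sufficiency check and the opening of the necessity direction are sound: the alternating $4$-cycle on $V(G)\setminus R$, the classes $V_1,\dots,V_4$, and $u,v$ being constant on each class. One slip: in the triangle $a_1a_3a_4$ it is colour $u$, not $v$, that occurs only once.

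The gap is the ``main step''. It carries the whole content of the forward direction and is only announced, not proved.

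\emph{Distances.} Part of it needs no swapping. Every vertex other than $u,v$ lies in a class. If $u$'s near classes were at distance $s>1$, a shortest path from $u$ would be $u v a_1$. Since $v$ does not resolve $a_1,a_2$, also $v\sim a_2$, so $d(u,a_2)\le 2=s$, a contradiction. Similarly, far classes at distance $>2$ would force $v\sim a_2$ and $v\sim a_3$, contradicting that $v$ resolves $a_2,a_3$.

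\emph{Adjacency.} This part cannot work as you describe. To replace $u$ by $w$, the vertex $w$ must resolve \emph{every} pair that $u$ alone resolves, including pairs $\{u,x\}$. A non-edge between classes only shows that $w$ resolves \emph{some} such pair. $G_3$ refutes exactly this inference. Take $R=\{u,v\}$, let $a$ be the common neighbour of $u,v$, let $c$ be the private neighbour of $v$, and let $d$ be the vertex adjacent to neither, so $ad$ is the missing edge. Then $d$ resolves the $u$-only pair $\{a,c\}$, yet $\{d,v\}$ is not resolving, since $u$ and $a$ both get the vector $(2,1)$. So ``unless the configuration is exactly $G_3$'' is precisely what must be proved, and nothing in your sketch separates $G_3$ from the cases you want to exclude.

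Two further points are unjustified. Your reason for within-class adjacency fails: non-adjacent twins in $u$'s neighbourhood are still at distance $1$ from $u$. And twinness was asserted too early: being resolved only by $r$ gives equal distances from $R\setminus\{r\}$, not from the other $a_j$.

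The missing idea is the paper's Observation~1. Once the distance-$2$ partition is known, for any choice of one vertex from each class, the remaining $n-4$ vertices form a resolving set, hence a metric basis. Apply the alternating-cycle structure to all of these bases. It follows that a vertex $r$ in a class of size at least $2$ is equidistant from every vertex of $V(G)\setminus\{u,v,r\}$, hence adjacent to all of them. The alternative ``adjacent to none'' is excluded by connectivity, by cut vertices never being forced, or by a distance count. This yields twinness and within-class cliques, and it confines every non-edge of $V(G)\setminus\{u,v\}$ to pairs of singleton classes.

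Deleting true twins preserves $\dim(G)=n-4$ and the forcing, so everything reduces to $n=6$. There a genuine classification is needed: the paper uses an exhaustive search giving $G_1,G_2,G_3$, and by hand this is a finite but nontrivial case analysis. Finally, one must show $G_3$ does not extend. By the all-or-none property, the only possible enlargement is adding a true twin to a private neighbour of $u$ or $v$, and this gives a $7$-vertex graph with two disjoint metric bases.

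None of these three ingredients appears in your plan: the all-or-none adjacency, the $n=6$ classification, and the non-extendability of $G_3$.
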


\begin{proof}	
For the purposes of this proof, we define a \emph{distance\nobreakdash-2 partition} of a graph with respect to two vertices $x$ and $y$ as follows. The sets $V_{11}$, $V_{12}$, $V_{21}$ and $V_{22}$ form a distance\nobreakdash-2 partition of $G$ with respect to $x$ and $y$ if
\begin{itemize}
		\item $V(G) \setminus \{x,y\} = V_{11} \cup V_{12} \cup V_{21} \cup V_{22}$,
		\item $V_{ij} \neq \emptyset$ for all $i,j \in \{1,2\}$,
		\item if $v \in V_{ij}$, then $d(x,v) = i$ and $d(y,v) = j$.
	\end{itemize}
		
	\medskip

\noindent
\textbf{Observation 1:} If $G$ can be distance\nobreakdash-2 partitioned with respect to two vertices $x$ and $y$, then any set $R = V(G) \setminus \{ v_{11}, v_{12}, v_{21}, v_{22} \}$ where $v_{11} \in V_{11}$, $v_{12} \in V_{12}$, $v_{21} \in V_{21}$ and $v_{22} \in V_{22}$ is a resolving set of $G$.

\smallskip\noindent
\textit{Proof of Observation 1:} All vertices in $R$ are clearly resolved from all other vertices in $G$. The vertices $v_{11}$, $v_{12}$, $v_{21}$ and $v_{22}$ are all in different partition sets, and thus $x$ and $y$ resolve them from each other. Therefore, $R$ is a resolving set of $G$. ($\diamond$)

\smallskip\noindent
We next proceed to show both implications in our result.

\smallskip\noindent
($\Leftarrow$) Assume that $G \in \F$. We will prove that $\dim (G) = n-4$ and that $G$ contains two basis forced vertices, which indeed are the vertices $u$ and $v$ from the definition of the graph $G\in\F$.
	
If $G=G_3$, then the claim holds according to an exhaustive computer search (the only metric basis of this graph is $\{u,v\}$ in Figure~\ref{fig-smallest}(c)).
	
Assume that $G \neq G_3$ and let $R$ be a metric basis of $G$. Since $G$ can clearly be distance\nobreakdash-2 partitioned with respect to $u,v$ (the vertices used in the definition of $G\in \F$), we have $\dim (G) \leq n-4$ due to Observation~1, and thus $|R| \leq  n-4$. The vertices within each partition set $V_i$ are twins, and thus at most one vertex from each set is not in~$R$. Suppose that $u \notin R$. If also $v \notin R$, then all elements of $R$ are within the clique $K_{n-2}$. Since $|R| \leq n-4$, there are two vertices in the clique that are not in~$R$. However, these vertices are not resolved from each other by any vertex of $R$, a contradiction. Thus, we can assume that $v \in R$. Since $|R| \leq n-4$, the sets $V_1$ and $V_3$ both contain a vertex that is not in $R$ or the sets $V_2$ and $V_4$ both contain a vertex that is not in $R$. 
This pair is not resolved by $v$ due to $v$ being either adjacent to or at distance 2 from both of them. Neither is this pair resolved by any vertex in $R$ that is in the clique $K_{n-2}$, since the pair is also included in the same clique. Thus, there is a pair of vertices that $R$ does not resolve, a contradiction. Hence, $u \in R$ (and $v \in R$ by similar arguments). Since each partition set contains at most one vertex that is not in~$R$, we now have $|R| \geq n-4$. Therefore, $\dim (G) = n-4$. Moreover, observe that any metric basis of $G$ is formed by $n-6$ vertices from the clique $K_{n-2}$, together with the two vertices $u,v$, which are hence basis forced vertices of $G$, which completes the proof of this implication. 

\bigskip
\noindent	
($\Rightarrow$) Let $G$ be a graph with $\dim(G) = n-4$ and such that it contains two basis forced vertices~$b_1$ and~$b_2$.
	
\medskip
\noindent
\textbf{Claim 1:} $G$ can be distance\nobreakdash-2 partitioned with respect to $b_1$ and $b_2$.

\smallskip
\noindent
\textit{Proof of Claim 1:} Let $R$ be a metric basis of $G$. Consider the graph $G_R$. Due to the case (i) of Lemma~\ref{lem:colorprops-more} and the cases (i) and (ii) of Lemma~\ref{lem:colorprops}, we obtain that $G_R [V(G) \setminus R] = C_4$ where the labels $b_1$ and $b_2$ alternate. Let us denote $V(G) \setminus R = \{x_1,x_2,y_1,y_2\}$ and let the edges of the aforementioned $C_4$ be $x_1x_2$, $x_2y_2$, $y_1y_2$ and $x_1y_1$ where $x_1x_2$ and $y_1y_2$ have the label $b_1$ and $x_1y_1$ and $x_2y_2$ have the label $b_2$ (see Figure~\ref{fig-colorgraph}). Notice that the edges $x_1y_2$ and $x_2y_1$ do not appear in $G_R$ as the corresponding pairs of vertices are resolved by both $b_1$ and $b_2$. Hence, for each $r \in R \setminus \{b_1,b_2\}$ (if such a vertex exists), there is exactly one edge of label $r$ in $G_R$, namely the edge $rw$ where $w \notin R$ (due to Lemma~\ref{lem:colorprops}~(ii)). 
	
	\begin{figure}
		\centering
		\begin{tikzpicture}[scale=1]
			\coordinate[label=180:$b_1 \ $] (b1) at (-3,1);
			\coordinate[label=180:$b_2 \ $] (b2) at (-3,-1);
			\coordinate (r1) at (-2,2);
			\coordinate (r2) at (-2,1);
			\coordinate (r3) at (-2,0);
			\coordinate (r4) at (-2,-1);
			\coordinate (r5) at (-2,-2);
			\coordinate (x1) at (0,1);
			\node at (x1) [above=1.5mm] {$x_1$};
			\coordinate (x2) at (2,2);
			\node at (x2) [right=1.5mm] {$x_2$};
			\coordinate (y1) at (0,-2);
			\node at (y1) [below=1.5mm] {$y_1$};
			\coordinate (y2) at (2,-1);
			\node at (y2) [right=1.5mm] {$y_2$};
			\draw[line width=0.3mm,red,style={decorate, decoration=snake}] (x1) to node[midway, below right, black] {$b_1$} (x2);
			\draw[line width=0.3mm,red,style={decorate, decoration=snake}] (y1) to node[midway, below right, black] {$b_1$} (y2);
			\draw[line width=0.3mm,blue] (x1) to node[midway, left, black] {$b_2$} (y1);
			\draw[line width=0.3mm,blue] (x2) to node[midway, right, black] {$b_2$} (y2);
			\draw[very thick,dotted] (r1) to (x2);
			\draw[very thick,dotted] (r2) to (x1) to (r3);
			\draw[very thick,dotted] (r4) to (y1) to (r5);
			\draw \foreach \x in {(x1),(x2),(y1),(y2)} {
				\x node[circle, draw, fill=white,
				inner sep=0pt, minimum width=7pt] {}
			};
			\draw \foreach \x in {(r1),(r2),(r3),(r4),(r5)} {
				\x node[circle, draw, fill=black,
				inner sep=0pt, minimum width=7pt] {}
			};
			\draw \foreach \x in {(b1)} {
				\x node[circle, draw, fill=red,
				inner sep=0pt, minimum width=7pt] {}
			};
			\draw \foreach \x in {(b2)} {
				\x node[circle, draw, fill=blue,
				inner sep=0pt, minimum width=7pt] {}
			};
			\draw[dashed] (-1,-3.3) -- (-1,3);
			\draw 
				(-2,-3) node[] {$R$}
				(.4,-3) node[] {$V(G) \setminus R$};
		\end{tikzpicture}
		\caption{The colour graph $G_R$.}\label{fig-colorgraph}
	\end{figure}
	
Let us consider the distances from $b_1$ to the vertices not in $R$.
Since the edge $x_1y_1$ has the label $b_2$ in $G_R$, the only vertex in $R$ that resolves $x_1$ and $y_1$ is $b_2$. In particular, this means that $d(b_1,x_1) = d(b_1,y_1)$. Similarly, since the edge $x_2y_2$ has the label $b_2$ in $G_R$, we have $d(b_1,x_2) = d(b_1,y_2)$. The edges $x_1x_2$ and $y_1y_2$ have, however, the label $b_1$, which means that $d(b_1,x_1) \neq d(b_1,x_2)$ and $d(b_1,y_1) \neq d(b_1,y_2)$. Therefore, the vertices not in $R$ can be partitioned into two sets $\{x_1,y_1\}$ and $\{x_2,y_2\}$ where vertices in the same set are at the same distance from $b_1$ but vertices in different sets are at different distances from $b_1$.
	
Let us then consider the distances from $b_1$ to the vertices in $R \setminus \{b_1,b_2\}$.
As stated before, the only edge with the label $r$ in $G_R$ where $r\in R \setminus \{b_1,b_2\}$ is of the form $rw$ where $w \notin R$. This means that the vertex $r$ is the only vertex in $R$ that resolves $r$ and $w$, and thus $d(b_1,r) = d(b_1,w)$. By combining this with the partition of the vertices not in $R$, we obtain the following partition for $V(G) \setminus \{b_1,b_2\} = V_1 \cup V_2$:
\begin{align*}
		V_1 &= \{x_1,y_1\} \cup \{r\in R \ | \ d(b_1,r) = d(b_1,x_1) = d(b_1,y_1)\}, \\
		V_2 &= \{x_2,y_2\} \cup \{r\in R \ | \ d(b_1,r) = d(b_1,x_2) = d(b_1,y_2)\}.
	\end{align*}

We will then consider the exact distances from $b_1$ to the partition sets $V_1$ and $V_2$. Consider the vertices $x_1 \in V_1$ and $x_2 \in V_2$. One of the partition sets is closer to $b_1$ than the other. Let us assume without loss of generality that $d(b_1,x_1) < d(b_1,x_2)$. We will prove that now $d(b_1,x_1) = 1$ and $d(b_1,x_2) = 2$. 
	
Suppose to the contrary that $d(b_1,x_1) > 1$. The (unique) shortest path from $b_1$ to $x_1$ is now necessarily $b_1b_2x_1$ since all other vertices in the graph are at equal or greater distance from $b_1$ compared to $x_1$. Now $d(b_2,x_1) = 1$ and, since the edge $x_1x_2$ has label $b_1$ in $G_R$, also $d(b_2,x_2) = 1$. This implies that the path $b_1b_2x_2$ is also present in $G$, which contradicts the fact that $d(b_1,x_1) \neq d(b_1,x_2)$. Therefore, $d(b_1,x_1) = 1$ and consequently $d(b_1,v) = 1$ for all $v \in V_1$.
	
Suppose then that $d(b_1,x_2) > 2$. Since the vertices in $V_1$ are all at distance~1 from $b_1$, a shortest path from $b_1$ to $x_2$ is of the form $b_1 w_1 b_2 x_2$, where $w_1 \in V_1$. Recall that $y_2 \in V_2$. Now there is a shortest path $b_1w_2b_2y_2$, where $w_2 \in V_1$. Thus, both $x_2$ and $y_2$ are at distance~1 from $b_2$. However, the edge $x_2y_2$ has the label $b_2$ in $G_R$, and thus $d(b_2,x_2) \neq d(b_2,y_2)$, a contradiction. Therefore, $d(b_1,x_2) = 2$ and consequently $d(b_1,w') = 2$ for all $w' \in V_2$.
	
Thus, the partition sets $V_1$ and $V_2$ are simply the vertices at distance~1 and~2 (respectively) from $b_1$. 
	
A similar partitioning can be done with respect to $b_2$. By similar arguments as for $b_1$, we can partition $V(G) \setminus \{b_1,b_2\}$ into $W_1$ and $W_2$ where
	\begin{align*}
		W_1 &= \{x_1,x_2\} \cup \{r\in R \ | \ d(b_2,r) = d(b_2,x_1) = d(b_2,x_2)\}, \\
		W_2 &= \{y_1,y_2\} \cup \{r\in R \ | \ d(b_2,r) = d(b_2,y_1) = d(b_2,y_2)\}.
	\end{align*}
Notice that now $x_1$ and $y_1$ (and $x_2$ and $y_2$) are in different partition sets. Without loss of generality, the set $W_1$ is closer to $b_2$ than $W_2$. By similar arguments as for $V_1$ and $V_2$, the vertices in $W_1$ are at distance~1 from $b_2$ and the vertices in $W_2$ are at distance~2 from $b_2$.
	
By combining the two partitions, we obtain a distance\nobreakdash-2 partition where $V_{11}  = V_1 \cap W_1$, $V_{12}  = V_1 \cap W_2$, $V_{21}  = V_2 \cap W_1$ and $V_{22}  = V_2 \cap W_2$. Notice that each of the sets $V_{11}$, $V_{12}$, $V_{21}$ and $V_{22}$ is nonempty since $x_1 \in V_{11}$, $y_1 \in V_{12}$, $x_2 \in V_{21}$ and $y_2 \in V_{22}$. ($\diamond$)
	
\medskip
Thus, we will consider the distance\nobreakdash-2 partition of $G$ with respect to $b_1$ and $b_2$ for the remainder of the proof.
    
\medskip
\noindent
\textbf{Claim 2:} If $|V_{ij}| \geq 2$ for some $i,j \in \{1,2\}$, then each $w \in V_{ij}$ is adjacent to all vertices in $V(G) \setminus \{b_1,b_2,w\}$.

\smallskip
\noindent
\textit{Proof of Claim 2:} Since $\dim (G) = n-4$, the set $R = V(G) \setminus \{ v_{11}, v_{12}, v_{21}, v_{22} \}$, where $v_{ij} \in V_{ij}$, is a metric basis of $G$ due to Observation~1. Let $V_{ij}$ be a partition set such that $|V_{ij}| \geq 2$. Now $V_{ij}$ contains an element of $R$, say, $r \in V_{ij} \cap R$. Due to the labels of the edges in the cycle in $G_R [V(G) \setminus R]$, we have 
    \begin{equation}\label{eqdist}
        d(r,v_{11}) = d(r,v_{12}) = d(r,v_{22}) = d(r,v_{21}).
    \end{equation} 
According to Observation~1, we can choose the vertices $v_{11}$, $v_{12}$, $v_{21}$ and $v_{22}$ to be any vertices in their respective partition sets. Therefore, the vertex $r$ is adjacent to either all or none of the vertices in $V(G) \setminus \{b_1,b_2,r\}$. (The freeness of the choice of $v_{ij}$ also implies that the vertices in $V_{ij}$ are twins.)
	
Suppose that $r$ is not adjacent to any vertices in $V(G) \setminus \{b_1,b_2,r\}$. Since $G$ is connected, we have $V_{ij} \neq V_{22}$. If $V_{ij} = V_{12}$, then $r$ is a leaf and $b_1$ is a cut-vertex. However, in~\cite{BasisForced} Corollary~7, it was shown that any finite graph has a metric basis that does not contain any cut-vertices. Thus, the cut-vertex $b_1$ cannot be a basis forced vertex, a contradiction. Similarly, if $V_{ij} = V_{21}$, then $b_2$ is a cut-vertex, a contradiction. Thus, suppose that $V_{ij} = V_{11}$. Now $d(r,v_{12})=2$, but $d(r,v_{22})>2$ which is a contradiction with \eqref{eqdist}.
Therefore, the vertex $r$ is adjacent to all vertices in $V(G) \setminus \{b_1,b_2,r\}$ and the claim follows from the free choice of $v_{ij}$ (or the vertices in $V_{ij}$ being twins). Notice that now the vertices in $V_{ij}$ are true twins. ($\diamond$)
	
\medskip
\noindent	
\textbf{Claim 3:} $G \in \F$.

\smallskip
\noindent
\textit{Proof of Claim 3:} When $n=6$, the claim holds according to an exhaustive computer search.
	
Let $n \geq 7$. Now there exists a $V_{ij}$ such that $|V_{ij}|\geq 2$. Let $G'$ be the graph we obtain from $G$ by removing a vertex $w$ from a partition set $V_{ij}$ such that $|V_{ij}| \geq 2$ (recall that the vertices in $V_{ij}$ are true twins by Claim~2, so it does not matter which vertex we remove). The distances between vertices in $G'$ are exactly the same as the distances between the corresponding vertices in $G$, and thus if $R$ is a metric basis of $G$, then $R \cap V(G')$ is a resolving set of $G'$. Conversely, if $R'$ is a metric basis of $G'$, then $R' \cup \{w\}$ is a resolving set of~$G$. Therefore, $\dim (G') = \dim (G) - 1 = n(G') - 4$ and $b_1$ and $b_2$ are basis forced vertices also in~$G'$, as otherwise we would obtain a metric basis in $G$ such that $b_1$ or $b_2$ (or both) were not included in it. Let $G''$ be the graph we obtain from $G$ when we repeat the vertex removal until there is exactly one vertex left in each partition set. By the arguments above, it must happen that $n(G'')=6$, $\dim (G'') = n(G'') - 4 = 2$, and $G''$ has two basis forced vertices $b_1$ and $b_2$. Since $n(G'') = 6$, it must be that $G''$ is one of $G_1$, $G_2$ or $G_3$, and so, $G'' \in \F$. If $G'' = G_1$ or $G_2$, then clearly $G \in \F$, by the constructive arguments used.
	
Suppose that $G'' = G_3$. Now $b_1$ and $b_2$ are adjacent in~$G$ and the partition sets $V_{11}$ and $V_{22}$ in $G$ must have only one element each due to Claim~2. Let us then consider the graph $G^*$ that we obtain by repeating the same vertex removal process as above until we are left with a graph on seven vertices for which a next vertex removal would lead to the graph $G_3$. Now, without loss of generality, we can assume that the partition sets $V^*_{ij}$ in $G^*$ are such that $|V^*_{12}| = 2$ and $|V^*_{ij}| = 1$ otherwise. Thus, $G^*$ is as depicted in Figure~\ref{fig-Gstar}. By the arguments above, the graph $G^*$ should have two basis forced vertices. However, $G^*$ has two disjoint metric bases (which are illustrated in Figure~\ref{fig-Gstar}) and thus no basis forced vertices, a contradiction. Therefore, $G'' \neq  G_3$. ($\diamond$)

\medskip\noindent
To end the proof, we only need to remark that the arguments above allow to confirm that the two basis forced vertices $b_1$ and $b_2$ play the roles of $u$ and $v$ as defined in the graphs of the family $\F$.
\end{proof}

\begin{figure}
	\centering
	\begin{tikzpicture}[scale=1]
		\draw (0,0) -- (1,0) -- (.7,1.3) -- (1.3,1) -- (0,1) -- (.7,1.3) -- (0,0);
		\draw (.7,1.3) -- (.5,2) -- (0,1) -- (0,0) -- (-1,.5) -- (0,1)  (1,0) -- (1.3,1) -- (0,0);
		\draw (.5,2) to[out=0,in=80,looseness=1] (1.3,1);
		\draw[bend left] (-1,.5) to (.5,2);
		\draw \foreach \x in {(-1,.5)} {
			\x node[circle, draw, fill=white,
			inner sep=0pt, minimum width=7pt] {}
		};
		\draw \foreach \x in {(.5,2),(1,0),(.7,1.3)} {
			\x node[circle, draw, fill=gray!60,
			inner sep=0pt, minimum width=7pt] {}
		};
		\draw \foreach \x in {(0,1),(1.3,1),(0,0)} {
			\x node[circle, draw, fill=black,
			inner sep=0pt, minimum width=7pt] {}
		};
		\draw (.5,2) node[above, inner sep=7pt] {$b_1$};
		\draw (-1,.5) node[left, inner sep=7pt] {$b_2$};
	\end{tikzpicture}
	\caption{The graph $G^*$ with two disjoint metric bases illustrated as black and gray vertices.}\label{fig-Gstar} 
\end{figure}

\section{Concluding remarks}

We close our exposition with the following open questions that, in our opinion, might be of interest to continue this investigation.

\begin{itemize}
   \item It would be desirable to characterize (even at least partially) all the graphs $G$ for which $\bfv(G)\neq 0$. Specifically, the cases $\bfv(G)=1$ and $\bfv(G)=2$ would be of interest.
   \item In the light of Theorem~\ref{char2bf}, it would be interesting to characterize graphs with different metric dimensions and number of basis forced vertices.
   \item In view of Theorem \ref{th:main-bound}, if $\dim(G)=n-5$, then $G$ must also have at most two basis forced vertices. In this sense, a weaker version of the item above concerns finding the graphs $G$ of order $n$ with metric dimension $n-5$ and one or two basis forced vertices.
\end{itemize}

\section*{Acknowledgements}

Ville Junnila, Tero Laihonen and Havu Miikonen have been partially supported by Research Council of Finland grant number 338797. Ismael G.Yero has been partially supported by the Spanish Ministry of Science and Innovation through the grant PID2023-146643NB-I00.


\end{document}